\newtheorem{theorem}{Theorem}[section]
\newtheorem{corollary}[theorem]{Corollary}
\newtheorem{lemma}[theorem]{Lemma}
\newtheorem{proposition}[theorem]{Proposition}
\theoremstyle{definition}
\newtheorem{definition}[theorem]{Definition}
\newtheorem{example}[theorem]{Example}
\newtheorem{question}[theorem]{Question}
\newtheorem{conjecture}{Conjecture}
\numberwithin{equation}{section}
\theoremstyle{remark}
\newtheorem{remark}[theorem]{Remark}
\begin{document}

\title{On the local dimensions of solutions of Brent equations}
\pdfbookmark[0]{On the local dimensions of Brent equations}{}

\author{Xin Li}
\address{Department of Mathematics, Zhejiang University of Technology, Hangzhou 310023, P. R. China}
\email{xinli1019@126.com (X. Li)}

\author{Yixin Bao}
\address{School of Sciences, Harbin Institute of Technology, Shenzhen, 518055, China}
\email{mabaoyixin1984@163.com (Y. Bao)}

%\author{Liping Zhang}
%\address{Department of Mathematics, Zhejiang University of Technology, Hangzhou 310023, P. R. China}
%\email{zhanglp06@gmail.com (L. Zhang)}
\author[zhang]{Liping Zhang}
\address{School of Mathematical Sciences, Qufu Normal University, Qufu 273165, P. R. China}
\email{zhanglp06@gmail.com (L. Zhang)}

%\thanks{X. Li is supported by National Natural Science Foundation of China (Grant No.11801506) and the National Science Basic Research Plan in Shaanxi Province of China (2021JQ-661). Y. Bao is supported by the NSFC (Grant No. 11801117) and the Natural Science Foundationof Guangdong Province. China (Grant No. 2018A030313268). L. Zhang is supported by the Zhejiang Provincial Natural Science Foundation of China (Grant No. LY21A010015) and Grant 11601484, the National Natural Science
%Foundation, People¡¯s Republic of China.}

\subjclass[2010]{Primary 14L30; Secondary 68Q17}
\keywords{Brent equations, Local dimension,  Jacobian matrix, Automorphism group, Matrix multiplication tensor}

\begin{abstract}
Let $\langle m,n,p \rangle$ be the matrix multiplication tensor.
The solution set of Brent equations corresponds to the tensor decompositions of $\langle m,n,p \rangle$. We study the local dimensions of solutions of the Brent equations over the field of complex numbers. The rank of Jacobian matrix of Brent equations provides an upper bound of the local dimension, which is well-known. We calculate the ranks for  some typical known solutions, which are provided in the databases \cite{Faw22+} and \cite{Heule19}. We show that the automorphism group of
the natural algorithm computing $\langle m,n,p \rangle$ is
$(\mathcal{P}_m\times \mathcal{P}_n\times \mathcal{P}_p)\rtimes Q(m,n,p)$, where $\mathcal{P}_m$, $\mathcal{P}_n$ and $\mathcal{P}_p$ are  groups of generalised permutation matrices, $Q(m,n,p)$ is a subgroup of $S_3$ depending on $m$, $n$ and $p$.
For other algorithms computing $\langle m,n,p \rangle$, some conditions  are given, which imply the corresponding automorphism groups are isomorphic to subgroups of $(\mathcal{P}_m\times \mathcal{P}_n\times \mathcal{P}_p)\rtimes Q(m,n,p)$. So under these conditions,
$m^2+n^2+p^2-m-n-p-3$ is a lower bound for the local dimensions of solutions of Brent equations. Moreover, the gap between the lower and upper bounds is discussed.
\end{abstract}
\maketitle

\section{Introduction}\label{sec:intro}
%\section{Preliminaries}\label{se:pre}
%·ûºÅͳһ
%Brent Equation, local dimension,

%The natural algorithm for multiplying two $n\times n$ matrices needs $n^3$ multiplications and $n^3-n^2$ additions. For a long time, it was believed that the natural algorithm is optimal. Until 1969, Strassen found an algorithm that performs $7$ multiplications for multiplying two $2\times 2$ matrices \cite{Stra-69}. Strassen's remarkable work leads to the development of algebraic complexity theory and tensor decompositions \cite{BCS97,deGr87,Kolda-09,Lands-17}.
%By now, as Strassen's algorithm,  finding optimal algorithms for multiplying two matrices is an interesting problem both in theory and practical algorithm design \cite{Faw22,Heule21,Lands-17,Smir-13}. It was firstly observed in \cite{Brent70} that finding Strassen type algorithms is equivalent to solve a system of polynomial equations. So the polynomial system  is called \emph{Brent equations}.

The solution set $V$ of a system of polynomial equations may contain  points, curves, surfaces, etc., each with its own dimension. A fundamental problem when working with such systems is to describe these (local) dimensions \cite{Bates09}. Let $V$ be an affine variety and $q\in V$.  The \emph{local dimension} of $q$ is defined by the maximum dimension of an irreducible component of $V$ containing $q$ \cite{cox15}.

In this paper, we give estimations for the local dimensions of the solution set of \emph{the Brent equations} \cite{Brent70,Heule21,Lands-17,Smir-13}. Since the Brent equations contain a huge number of equations and variables, it is hard to calculated the local dimensions explicitly by current methods, such as \cite{Bates09,Lai-21}. So in this paper, we study the upper and lower bounds of the local dimensions. Firstly, by the Jacobian Criterion of \cite[Theorem 4.1.12]{DS07}, we calculate the upper bound of local dimensions for some typical points, which are provided in the databases \cite{Faw22+} and \cite{Heule19}. Secondly, by the theory of linear algebraic groups, we get the lower bound of local dimensions \cite[Proposition 1.11]{Brion10}. Other methods for the estimation of local dimensions can be found in \cite{Breid18}.

%Let $\mathbb{C}^{m\times n}$, $\mathbb{C}^{n\times p}$ and $\mathbb{C}^{p\times m}$ denote the set of complex $m\times n$, $n\times p$  and $p\times m$ matrices, respectively.

Let $\langle m,n,p \rangle$ denote the  well-known matrix multiplication tensor (see e.g. \cite{Bur15,deGr87,Heule21,Lands-17}). The tensor decomposition of $\langle m,n,p \rangle$ of length $r$ gives rise to the Brent equations \cite{Heule21,Smir-13} (see also Section \ref{subsec:brent}). We  denote it by $B(m,n,p;r)$. The solution set of $B(m,n,p;r)$ in the field of complex number is denoted by $V(m,n,p;r)$. For a point $q\in V(m,n,p;r)$, let $\dim_q V(m,n,p;r)$ denote its local dimension. Let $J_q(B(m,n,p;r))$ denote the Jacobian matrix of $B(m,n,p;r)$ at $q$ and $rank~J_q(B(m,n,p;r))$ its rank. The Jacobian Criterion implies $$\dim_q V(m,n,p;r)\leq k-rank~J_q(B(m,n,p;r))$$ where $k=(mn+np+pm)r$. In Section \ref{ssubsec:rk3344}, we calculated
$rank~J_q(B(3,3,3;23))$ and $rank~J_{q'}(B(4,4,4;49))$, where $q$ and $q'$ are the solutions summarized in the databases \cite{Faw22+} and \cite{Heule19}. So we can get upper bounds of local dimensions of these points.  For the solutions provided in the database \cite{Heule19}, the distribution of $rank~J_q(B(3,3,3;23))$ is summarized in Table \ref{tab:333-1} and \ref{tab:333-2} of Section \ref{ssubsec:rk3344}. We can see that the upper bounds given by the Jacobian Criterion for Brent equations are meaningful, because the ranks of $J_q(B(3,3,3;23))$ and $J_{q'}(B(4,4,4;49))$ are big.

On the other hand, the isotropy group of $\langle m,n,p \rangle$ induces a group action on $V(m,n,p;r)$ \cite{deGr78-1,deGr78-2, deGr87}. By the theory of linear algebraic groups \cite[Proposition 1.11]{Brion10}, we get a lower bound for local dimensions of points in $V(m,n,p;r)$. Let $\mathcal{A}(m,n,p)=\{e_{ij}\otimes e_{jk}\otimes e_{ki}|1\leq i\leq m, 1\leq j\leq n, 1\leq k\leq p\}$
denote the \emph{natural algorithm} computing $\langle m,n,p \rangle$. We show that the automorphism group of $\mathcal{A}(m,n,p)$ is $(\mathcal{P}_m\times \mathcal{P}_n\times \mathcal{P}_p)\rtimes Q(m,n,p)$, where  $\mathcal{P}_m\subseteq GL_m(\mathbb{C})$ is the generalised permutation groups (similarly, for $\mathcal{P}_n$ and $\mathcal{P}_p$), $Q(m,n,p)$ is a subgroup of $S_3$ depending on $m,n,p$ (see Corollary \ref{cor:autnat}).
Some conditions (see e.g. Definition \ref{def:d-p} and \ref{def-wdp}) are given which imply that the automorphism group of an  algorithm computing $\langle m,n,p \rangle$ is (isomorphic to) a subgroup of $(\mathcal{P}_m\times \mathcal{P}_n\times \mathcal{P}_p)\rtimes Q(m,n,p)$ (see Corollary \ref{cor:autmnpq} and Proposition \ref{prop-wdaut}).
So under these conditions, we obtain a lower bound for local dimensions for points $q$ in $V(m,n,p;r)$ (see Corollary \ref{cor:lbbe} and Proposition \ref{prop-wdaut}):
$$\dim_q V(m,n,p;r)\geq m^2+n^2+p^2-m-n-p-3.$$
If we take account into another two (trivial) linear group actions which all tensors have, then the lower bound above can be improved to $$\dim_q V(m,n,p;r)\geq m^2+n^2+q^2+2r-m-n-p-3,$$
or even
$$\dim_q V(m,n,p;r)\geq m^2+n^2+q^2+2r-3,$$
which can be seen at equation (\ref{eq-lqq}) and (\ref{eq-lqqq}). The gap between the lower and upper bounds and some numerical experimental results are discussed.

%Another, $m^2+n^2+q^2+2r-3$ or $m^2+n^2+q^2+2r-m-m-p-3$.

Moreover, we find that the polynomial ideals generated by Brent equations may not be radical. The ranks of Jacobian matrices may serve as invariants to distinguish group orbits.
%By numerical experiments, for Brent equations,  if $q$ and $q'$

The paper is organized as follows. In Section \ref{sec:general}, the preliminaries about local dimension are recalled, which are the Jacobian Criterion and the property for the dimension  of group orbits. The main results are given in Section \ref{sec:brent}.
%A conclusion is
%To our best knowledge, this result has not appeared in literatures, although some typical automorphism groups were calculated explicitly, such as \cite{Bur14,Bur15,Bur22} and the database \cite{Sedogl-23}.

%
% and $V_3=\mathbb{C}^{p\times m}$.
%The matrix multiplication tensor is defined by (see e.g. \cite{Bur15,Lands-17})
%\begin{equation}\label{eq:defmmt}
%\langle m,n,p \rangle=\sum_{i=1}^m\sum_{j=1}^n\sum_{k=1}^p e_{ij}\otimes e_{jk}\otimes e_{ki}\in \mathbb{C}^{m\times n}\otimes \mathbb{C}^{n\times p}\otimes \mathbb{C}^{p\times m},
%\end{equation}
%where $e_{ij}$ denotes the matrix with its coefficient at the intersection of row $i$ and column $j$ equal to 1 and all its other coefficients equal to 0. calculate some typical lo For Brent equations, it is hard to calculated the local dimensions explicitly. Because it contains a huge number of equations and variables. Beginning with
%We show that the automorphism group of any tensor decompositions of $\langle m,n,p \rangle$ is a finite group. To our best knowledge, this result has never appeared in the literature.

\section{Upper and lower bounds of the local dimensions}\label{sec:general}
\subsection{Rank of Jacobian matrix and the upper  bound}
Let  $\mathbb{C}$ denote the field of complex numbers and $\mathbb{C}^n$ the $n$-dimensional complex vector space. Let $A=\mathbb{C}[x_1,...,x_n]$ be the polynomial ring in $n$ variables over $\mathbb{C}$. If $T$ is any subset of $A$, set
$$V(T)=\{p\in \mathbb{C}^n|f(p)=0~for~all~f\in T\}$$
to be the common zeros of all the elements of $T$.

\begin{definition}
A subset $Y$ of $\mathbb{C}^n$ is an affine variety if there exists a subset $T\subseteq A$ such that $Y=V(T)$.
\end{definition}
\begin{remark}
In literatures, $Y$ is also called an algebraic set (see e.g. \cite{DS07}). Here we follow the definition of \cite{cox15}.
\end{remark}
%, it is of significant computational value to know the maximum dimension of the irreducible components containing $p$, called the local dimension at $p$ and denoted here as $\dim_pV$

Thus, the solution set $V$ of a system of polynomial equations is an affine variety. Given a  point $p$ on the affine variety $V =\{x\in \mathbb{C}^n |F(x)=0\}$ for a set of polynomials $F(x)=\{f_1(x),..., f_m(x)\}\subseteq \mathbb{C}[x_1,...,x_n]$. The Jacobian matrix of $F$ at a point $p\in \mathbb{C}^n$ is defined by
$$J_p(F)=J_p(f_1,f_2,...,f_m)=\left(
           \begin{array}{ccc}
\frac{\partial f_1}{\partial x_1}(p) & \cdots & \frac{\partial f_1}{\partial x_n}(p) \\
 \vdots & ~ & \vdots \\
\frac{\partial f_m}{\partial x_1}(p) & \cdots & \frac{\partial f_m}{\partial x_n}(p) \\
           \end{array}
         \right)_{m\times n}$$
The \emph{vanishing ideal} of $V$  is given by the set
$$I(V)=\{f\in \mathbb{C}[x_1,...,x_n]|~f(a)=0~ \text{for all}~a\in V\},$$
which is a radical ideal. Let $\langle f_1,f_2,...,f_m\rangle$ denote the ideal generated by $f_1,f_2,...,f_m$ in $\mathbb{C}[x_1,...,x_n]$.
Then $I(V)$ is the radical of $\langle f_1,f_2,...,f_m\rangle$.
So $\langle f_1,f_2,...,f_m\rangle$ may be strictly contained in $I(V)$. Let $T_p(V)$ denote the tangent space of $p$ at $V$ (see Definition 4.1.6 of \cite{DS07}). If $I(V)$ is generated by
$\{g_1,g_2,...,g_r\}$, then by Remark 4.1.7 of \cite{DS07} we know that
$$\dim T_p(V)=n-rank~J_p(g_1,g_2,...,g_r).$$

For an affine variety $V$ which is defined by $F=\{f_1,f_2,...,f_m\}$, we have the following upper bound on the local dimension of a point $p\in V$, which is called \emph{Jacobian Criterion} in \cite{DS07}.
\begin{theorem}\cite[Theorem 4.1.12, Jacobian Criterion]{DS07}\label{th:updim}
Let $p$ be a point on an affine variety $V \in \mathbb{C}^n$. Then $$\dim_p V\leq \dim T_p(V)\leq n-rank~J_p(F).$$
\end{theorem}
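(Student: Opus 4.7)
The plan is to prove the two inequalities in the chain $\dim_p V \le \dim T_p(V) \le n - \mathrm{rank}\,J_p(F)$ separately. The excerpt already supplies the identity $\dim T_p(V) = n - \mathrm{rank}\,J_p(g_1,\ldots,g_r)$ whenever $\{g_1,\ldots,g_r\}$ generates the vanishing ideal $I(V)$, and I will use this as the pivot between the geometric notion of tangent space and the Jacobian of an arbitrary generating set of a defining ideal.

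For the right-hand inequality $\dim T_p(V) \le n - \mathrm{rank}\,J_p(F)$, the key observation is that $\langle F\rangle\subseteq I(V)$, so each $f_i$ can be expressed as $f_i = \sum_j h_{ij}\, g_j$ for some $h_{ij}\in\mathbb{C}[x_1,\ldots,x_n]$. Applying the product rule and using that $g_j(p)=0$, every term vanishes at $p$ except the one carrying $h_{ij}(p)$, so
\[
\frac{\partial f_i}{\partial x_k}(p) \;=\; \sum_j h_{ij}(p)\,\frac{\partial g_j}{\partial x_k}(p).
\]
Hence each row of $J_p(F)$ is a $\mathbb{C}$-linear combination of the rows of $J_p(g_1,\ldots,g_r)$, so $\mathrm{rank}\,J_p(F)\le \mathrm{rank}\,J_p(g_1,\ldots,g_r)$. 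Combining this with the supplied identity yields $\dim T_p(V) = n - \mathrm{rank}\,J_p(g_1,\ldots,g_r) \le n - \mathrm{rank}\,J_p(F)$.

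For the left-hand inequality $\dim_p V \le \dim T_p(V)$, I would pass to the local ring $\mathcal{O}_{V,p}$ with maximal ideal $\mathfrak{m}_p$. The local dimension $\dim_p V$ equals the Krull dimension of $\mathcal{O}_{V,p}$ (the algebraic reformulation of ``maximal dimension of an irreducible component through $p$''), while $T_p(V)$ has the intrinsic description $T_p(V)\cong(\mathfrak{m}_p/\mathfrak{m}_p^2)^{\ast}$, so $\dim T_p(V)=\dim_{\mathbb{C}}\mathfrak{m}_p/\mathfrak{m}_p^2$. The required inequality then reduces to the standard commutative-algebra fact that for any Noetherian local ring $(R,\mathfrak{m},k)$,
\[
\dim R \;\le\; \dim_k \mathfrak{m}/\mathfrak{m}^2,
\]
which follows from Nakayama's lemma: any lift of a basis of $\mathfrak{m}/\mathfrak{m}^2$ generates $\mathfrak{m}$ and hence supplies a system of parameters of length at most $\dim_k \mathfrak{m}/\mathfrak{m}^2$.

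The hard part is not a computation but rather the bookkeeping behind the left-hand inequality: one must verify that the geometric notion of local dimension used in the excerpt genuinely coincides with $\dim\mathcal{O}_{V,p}$, and that the intrinsic cotangent-space description $(\mathfrak{m}_p/\mathfrak{m}_p^2)^{\ast}$ matches the Jacobian characterization of $T_p(V)$ quoted from \cite{DS07}. Once these identifications are cited from a standard reference, the two estimates combine directly to give the theorem.
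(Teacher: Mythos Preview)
Your argument is correct and is the standard proof of the Jacobian Criterion. Note, however, that the paper does not supply its own proof of this statement: Theorem~\ref{th:updim} is quoted verbatim from \cite[Theorem 4.1.12]{DS07} and used as a black box, so there is no in-paper proof to compare against. Your write-up would serve as a self-contained justification of the cited result.
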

In Section \ref{sec:brent}, when $F$ is the Brent equations, using computer we will calculate $J_p(F)$ for some typical known solutions.
So they give the upper bounds of local dimensions for these solutions.

\subsection{Isotropy group orbit and the lower bound}\label{subsec:isogolb}
In this section, we recall some results of \cite{Brion10} and \cite{deGr87}.

A $G$-variety is an affine variety $X$ equipped with an action of the algebraic group $G$. Given a $G$-variety $X$ and a point $x \in X$, the orbit $G\cdot x\subseteq X$ is the set of all $g \cdot x$, where $g \in G$. The automorphism group (it was called the isotropy group in \cite{Brion10}) $G_x \subseteq G$ is the set of those $g \in G$ such that $g \cdot x = x$; it is a closed subgroup of $G$.

\begin{proposition}\cite[Proposition 1.11]{Brion10}\label{prop:ldimgo}
The group orbit $G\cdot x$ is a locally closed, smooth subvariety of $X$, and every component of $G\cdot x$ has dimension $\dim(G)-\dim (G_x)$.
\end{proposition}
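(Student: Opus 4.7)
The plan is to study the orbit map $\mu_x\colon G\to X$, $g\mapsto g\cdot x$, whose set-theoretic image is exactly $G\cdot x$. First, Chevalley's theorem on constructible sets shows $\mu_x(G)$ is constructible in $X$, so it contains a subset $U$ that is open and dense in its closure $\overline{G\cdot x}$. Next, the transitivity of the $G$-action on $G\cdot x$ (via the induced variety automorphisms of $X$) lets me translate $U$ by elements of $G$ to cover every point of the orbit, upgrading "open at one point of $\overline{G\cdot x}$" to "open everywhere in $\overline{G\cdot x}$"; this proves $G\cdot x$ is locally closed in $X$.

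For the dimension claim I would apply the fiber dimension theorem to $\mu_x$. The fiber over $x$ is precisely $G_x$ by definition, and by $G$-equivariance every fiber $\mu_x^{-1}(g\cdot x)=gG_x$ has the same dimension $\dim G_x$. Since all irreducible components of the algebraic group $G$ have equal dimension $\dim G$, applying the theorem component-by-component yields $\dim C = \dim G - \dim G_x$ for each irreducible component $C$ of $G\cdot x$. Smoothness is then a soft consequence of homogeneity: every variety has a non-empty smooth open locus, and since $G$ acts transitively on $G\cdot x$ by automorphisms of $X$, smoothness at any single point spreads to the entire orbit.

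The main obstacle I anticipate is the book-keeping when $G$ is not connected: one must verify that each irreducible component of $G\cdot x$ arises as the image of an irreducible component of $G$, and that the fiber dimension theorem applies cleanly on each piece so the equality $\dim G-\dim G_x$ holds componentwise rather than only generically. A secondary subtlety is justifying that $G\cdot x$ is genuinely open (not merely dense) in its closure, which is handled by combining constructibility with the translation trick rather than anything deeper. Once these are in place, the rest of the argument is a direct application of standard facts about morphisms of varieties.
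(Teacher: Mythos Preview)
The paper does not actually prove this proposition: it is stated with a citation to \cite[Proposition 1.11]{Brion10} and used as a black box, so there is no ``paper's own proof'' to compare against. Your outline is the standard textbook argument (essentially the one Brion gives): Chevalley constructibility plus $G$-translates to get local closedness, homogeneity for smoothness, and the fiber-dimension theorem applied to the orbit map for the dimension formula; the caveats you flag about disconnected $G$ are exactly the right ones to check, and with those handled the argument is complete.
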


A group $G$ is said to be a \emph{semidirect product} of $A$ by $B$, which is denoted by $G=A\rtimes B$, if $G=AB$, $A\unlhd G$ ($A$ is a normal subgroup of $G$) and $A\cap B=1$. If $A$ is a  subgroup (or isomorphic to a subgroup) of $G$, then it will be denoted by $A\leq G$.  Let $PGL_n(\mathbb{C})$ and $\mathcal{D}_n\subseteq GL_n(\mathbb{C})$ denote the projective general linear group and the group of nonsigular diagonal matrices over $\mathbb{C}$, respectively. Let $S_n$ denote the symmetric group of order $n$. For the dimension of algebraic groups, we have the following proposition. They can be found in Section 1.8 and 5.5 of \cite{Spr-98} and \cite{Mur-05}.
\begin{proposition}\label{prop:dimag}
Suppose that $G$, $H$ are algebraic groups. Then
\begin{enumerate}
  \item  $\dim G=0$ if $G$ is a finite group;
  \item $\dim (G\times H)=\dim (G\rtimes H)=\dim G+\dim H$;
  \item In particular, $\dim PGL_n(\mathbb{C})=n^2-1$;
  \item\label{prop:dimag-item-mnp}  $\dim ((PGL_m(\mathbb{C})\times  PGL_n(\mathbb{C})\times PGL_p(\mathbb{C}))\rtimes S_3)=m^2+n^2+p^2-3$;
  \item $\dim ( \mathcal{D}_n\rtimes S_n) =n$.
\end{enumerate}
\end{proposition}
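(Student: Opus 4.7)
The plan is to treat each of the five items as an essentially immediate consequence of the basic properties of Krull dimension for affine varieties together with the structure theory of the classical linear groups, as developed in Sections 1.8 and 5.5 of \cite{Spr-98} and \cite{Mur-05}; no item requires an original argument, and the work is simply to organize them in the right order.

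For item (1), a finite group, viewed as an affine variety, is a finite disjoint union of reduced points and therefore has Krull dimension zero. For item (2), both $G\times H$ and $G\rtimes H$ share the same underlying variety $G\times H$ (only the group law differs), so the identity reduces to the classical fact that the dimension of a product of algebraic varieties is the sum of the dimensions. Item (3) is the only step with genuine technical content: I would present $PGL_n(\mathbb{C})$ as the quotient of $GL_n(\mathbb{C})$ by its one-dimensional center $Z=\mathbb{C}^{\ast}I_n$. Since $GL_n(\mathbb{C})$ is open in the affine space $M_n(\mathbb{C})\cong\mathbb{C}^{n^2}$ it has dimension $n^2$, and every fibre of the quotient map $GL_n(\mathbb{C})\to PGL_n(\mathbb{C})$ is a coset of $Z$, hence one-dimensional; the fibre-dimension theorem for surjective morphisms of irreducible varieties then yields $\dim PGL_n(\mathbb{C})=n^2-1$.

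Items (4) and (5) are then pure bookkeeping. For (4), I would apply (2) twice to split the triple direct product and once more to fold in the $S_3$ factor, whose dimension is $0$ by (1), and insert the formula from (3) for each $PGL$-factor to obtain $(m^2-1)+(n^2-1)+(p^2-1)+0 = m^2+n^2+p^2-3$. For (5), $\mathcal{D}_n\cong(\mathbb{C}^{\ast})^n$ as an algebraic group, so $\dim\mathcal{D}_n=n$, while $\dim S_n=0$ by (1), and (2) then yields $\dim(\mathcal{D}_n\rtimes S_n)=n$. The main (and essentially only) obstacle is the verification in (3) that the dimension behaves as expected under the quotient by a closed normal subgroup; this I would dispatch by direct appeal to the fibre-dimension theorem, so that the entire proposition is reduced to citations and arithmetic.
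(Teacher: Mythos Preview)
Your proposal is correct. The paper does not actually give a proof of this proposition at all: it is stated as a collection of standard facts with the remark that ``They can be found in Section 1.8 and 5.5 of \cite{Spr-98} and \cite{Mur-05},'' and is then used as a black box. Your write-up spells out the underlying reasons (finite varieties have dimension zero, $G\rtimes H$ and $G\times H$ share the same underlying variety, the fibre-dimension theorem for $GL_n\to PGL_n$, and the bookkeeping for (4) and (5)), which is strictly more than the paper provides; so there is nothing to compare beyond noting that you have filled in what the authors left to the references.
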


Let $U$, $V$, $W$ be finite dimensional vector spaces over $\mathbb{C}$ and  $\Phi:~U \times V \to W$ a bilinear mapping. Let $t\in U^*\otimes V^* \otimes W$ be the structure tensor of $\Phi$ . According to some choice of bases in $U$, $V$, $W$, let $t_{\lambda\mu\nu}$ be the coordinates of $t$, where $\lambda=1,...,\dim U$; $\mu=1,...,\dim V$; $\nu=1,...,\dim W$.
Suppose that $t$ has a tensor (or decomposable) decomposition of length $R$. That is, $t=\sum_{i=1}^R u_i\otimes v_i\otimes w_i$ where $u_i=(u_{i\lambda})\in U^*\cong U$, $v_i=(v_{i\mu})\in V^*\cong V$, $w_i=(w_{i\nu})\in W$. It means the following polynomial system has solutions:
\begin{equation}\label{eq:algpoly}
t_{\lambda\mu\nu}-\sum_{i=1}^R u_{i\lambda}v_{i\mu}w_{i\nu}=0.
\end{equation}
Let $\mathfrak{a}_R(\Phi)$ denote the set of all solutions which is an affine variety. It was called the \emph{algorithm variety} of $\Phi$ \cite{deGr87}. In particular, if $\Phi$ is the matrix multiplication, then the polynomial system (\ref{eq:algpoly}) is called \emph{the Brent equations} (see an explicit description in equation (\ref{eq:brent})).

If some linear transformation $\varphi\in End(U^*\otimes V^* \otimes W)$ leaves the structure tensor $t$ of $\Phi$ fixed, i.e. $\varphi(t)=t$, then it induces a transformation (denoted by $\hat{\varphi}$): $\hat{\varphi}:\mathfrak{a}_R(\Phi)\to \mathfrak{a}_R(\Phi)$. In particular, let
$$H=\{\varphi|~\varphi(t)=t~\text{and}~\varphi\in GL(U^*\otimes V^* \otimes W)\}.$$
Then $H\leq GL(U^*\otimes V^* \otimes W)$ is called \emph{the isotropy group} of $t$. The transformation $\hat{\varphi}$ induced by $\varphi\in H$ is a group action on the affine variety $\mathfrak{a}_R(\Phi)$. For a point $x\in \mathfrak{a}_R(\Phi)$, if we know the dimension of the automorphism group $H_x$ and $H$, then with these notations, by Proposition \ref{prop:ldimgo} we have the following proposition
\begin{proposition}\label{prop:lowerb}
For $x\in \mathfrak{a}_R(\Phi)$, the local dimension of $x$ is greater than $\dim H-\dim H_x$.
\end{proposition}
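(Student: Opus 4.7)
The plan is to exhibit an irreducible closed subvariety of $\mathfrak{a}_R(\Phi)$ passing through $x$ whose dimension equals $\dim H-\dim H_x$; the conclusion then follows at once from the definition of local dimension as the maximum of $\dim Z$ taken over irreducible components $Z$ of $\mathfrak{a}_R(\Phi)$ containing $x$. The natural candidate is the Zariski closure $\overline{H\cdot x}$ of the $H$-orbit of $x$ inside $\mathfrak{a}_R(\Phi)$.

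First, I would check that the hypotheses of Proposition \ref{prop:ldimgo} are in force for the $H$-action on $\mathfrak{a}_R(\Phi)$. The isotropy group $H$ is closed in $GL(U^*\otimes V^*\otimes W)$ since it is cut out by the polynomial conditions $\varphi(t)=t$, and is therefore a linear algebraic group. The map $\hat\varphi$ described in Section \ref{subsec:isogolb} is polynomial in the entries of $\varphi$ and of the decomposition of $t$, and it preserves $\mathfrak{a}_R(\Phi)$ precisely because $\varphi(t)=t$. Thus $\mathfrak{a}_R(\Phi)$ genuinely carries the structure of an $H$-variety.

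Next, I would invoke Proposition \ref{prop:ldimgo} directly: the orbit $H\cdot x$ is a locally closed smooth subvariety of $\mathfrak{a}_R(\Phi)$ and every irreducible component of $H\cdot x$ has dimension $\dim H-\dim H_x$. Let $Y$ be the Zariski closure inside $\mathfrak{a}_R(\Phi)$ of an irreducible component of $H\cdot x$ through $x$; then $Y$ is an irreducible closed subvariety of $\mathfrak{a}_R(\Phi)$ with $x\in Y$ and $\dim Y=\dim H-\dim H_x$. Since $Y$ is irreducible, it is contained in some irreducible component $Z$ of $\mathfrak{a}_R(\Phi)$ passing through $x$, which yields
$$\dim_x \mathfrak{a}_R(\Phi)\ \ge\ \dim Z\ \ge\ \dim Y\ =\ \dim H-\dim H_x.$$

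There is no serious obstacle here; the statement is essentially a formal consequence of Proposition \ref{prop:ldimgo} together with the standard fact that any irreducible subvariety through $x$ is contained in some top-dimensional component through $x$. The only small care point is to read the inequality in the statement as $\ge$ rather than strict, consistent with the usage in Proposition \ref{prop:ldimgo}, since the orbit closure can in principle already constitute an entire component of $\mathfrak{a}_R(\Phi)$.
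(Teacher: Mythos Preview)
Your argument is correct and follows essentially the same approach as the paper, which simply states the proposition as an immediate consequence of Proposition \ref{prop:ldimgo} without writing out the details. You have supplied the standard filling-in (algebraicity of $H$, passing to the closure of an orbit component through $x$, and containment in an irreducible component of $\mathfrak{a}_R(\Phi)$), and your remark that the inequality must be read as $\ge$ is well taken and consistent with how the paper later applies the result in Corollary \ref{cor:lbbe}.
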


% by Proposition \ref{prop:ldimgo}, $\dim H-\dim H_x$ is a lower bound of the local dimension of $x$.

In particular, if $\Phi$ is the matrix multiplication and $H$ the isotropy group studied in \cite{deGr78-1,deGr78-2} and \cite{Bur15}, then in Section \ref{subse:auto} we will study the structure of $H_x$. A lower bound of local dimensions of points in $\mathfrak{a}_R(\Phi)$ (the solution of Brent equations) will be given.

\subsection{Rank of the Jacobian matrix as an invariant}

Suppose that $V$ is an affine variety and $G$ is an algebraic group which acts on $V$. If we know the generators of the radical ideal $I(V)$,  then by the following proposition we can see that the rank of Jacobian matrix of the generators is an invariant to distinguish group orbits.

\begin{proposition}\cite[Cor. 4.1.18]{DS07}\label{prop:isotp}
If $\varphi: A\to B$ is an isomorphism of affine varieties
and $p \in A$ is a point, then
$$d_p \varphi: T_p A \to T_{\varphi(p)}B$$
is an isomorphism of $\mathbb{C}$-vector spaces.
\end{proposition}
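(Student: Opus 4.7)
The plan is to exploit functoriality of the differential construction. I would first establish two preliminary facts. Fact one: for composable morphisms of affine varieties, $d_p(\psi' \circ \psi) = d_{\psi(p)}\psi' \circ d_p \psi$. Fact two: $d_p(\mathrm{id}_A) = \mathrm{id}_{T_p A}$. Both can be read off immediately from either the Jacobian description (the ordinary chain rule applied to polynomial maps, together with the observation that the Jacobian of the identity map at any point is the identity matrix) or the intrinsic description of $T_p A$ as $\mathrm{Hom}_{\mathbb{C}}(\mathfrak{m}_p/\mathfrak{m}_p^2,\mathbb{C})$, where $\mathfrak{m}_p$ is the maximal ideal of the local ring $\mathcal{O}_{A,p}$.

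Given these two facts, the conclusion follows in one step. Applying the chain rule to $\varphi^{-1}\circ\varphi=\mathrm{id}_A$ gives $d_{\varphi(p)}\varphi^{-1}\circ d_p\varphi = \mathrm{id}_{T_p A}$, and applying it to $\varphi\circ\varphi^{-1}=\mathrm{id}_B$ gives $d_p\varphi\circ d_{\varphi(p)}\varphi^{-1} = \mathrm{id}_{T_{\varphi(p)} B}$. Together these show that $d_p\varphi$ is a $\mathbb{C}$-linear bijection with two-sided inverse $d_{\varphi(p)}\varphi^{-1}$, which is exactly the claim.

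The main obstacle is setting up the chain rule in the algebraic category, since affine varieties are not smooth manifolds and the Jacobian depends on a choice of generators for the vanishing ideal. The cleanest way around this is to use the intrinsic definition: $\varphi$ induces a $\mathbb{C}$-algebra homomorphism $\varphi^{*}:\mathbb{C}[B]\to\mathbb{C}[A]$, which localizes to a local homomorphism $\mathcal{O}_{B,\varphi(p)}\to\mathcal{O}_{A,p}$ sending $\mathfrak{m}_{\varphi(p)}$ into $\mathfrak{m}_p$, and hence descends to a $\mathbb{C}$-linear map $\mathfrak{m}_{\varphi(p)}/\mathfrak{m}_{\varphi(p)}^2 \to \mathfrak{m}_p/\mathfrak{m}_p^2$ whose $\mathbb{C}$-dual is $d_p\varphi$. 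Functoriality of localization and of the quotient $\mathfrak{m}/\mathfrak{m}^2$ then yields the chain rule for free. When $\varphi$ is an isomorphism, $\varphi^{*}$ is an isomorphism of algebras, so each derived map in the chain above is an isomorphism, and in particular $d_p\varphi$ is an isomorphism of $\mathbb{C}$-vector spaces.
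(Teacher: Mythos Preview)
Your argument is correct and is the standard functoriality proof: the chain rule $d_p(\psi'\circ\psi)=d_{\psi(p)}\psi'\circ d_p\psi$ together with $d_p(\mathrm{id}_A)=\mathrm{id}_{T_pA}$ immediately forces $d_p\varphi$ to be invertible with inverse $d_{\varphi(p)}(\varphi^{-1})$. The intrinsic route via $\mathfrak{m}_p/\mathfrak{m}_p^2$ is the right way to make this rigorous without worrying about choices of generators, and your observation that an isomorphism of varieties already gives an isomorphism $\varphi^*$ of coordinate rings (hence of local rings, hence of cotangent spaces) is in fact the whole proof in one line.

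As for comparison with the paper: there is nothing to compare. The paper does not prove this proposition at all; it merely quotes it from \cite[Cor.~4.1.18]{DS07} and then applies it in the proof of Corollary~\ref{cor:rkorb}. So your write-up supplies strictly more than what the paper contains. If anything, you could shorten it: once you invoke the cotangent-space description and note that $\varphi^*$ is an algebra isomorphism, the conclusion is immediate and the separate chain-rule discussion becomes redundant.
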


\begin{corollary}\label{cor:rkorb}
Suppose that $V\subseteq \mathbb{C}^n$ is an affine variety and $G\subseteq GL_n(\mathbb{C})$ is an algebraic group which acts on $V$. Suppose that $I(V)$ is generated by $\{f_1,f_2,...,f_m\}\subseteq \mathbb{C}[x_1,x_2,...,x_n]$. Let
$p,q\in V$ be two points. If $rank~J_p(f_1,f_2,...,f_m)\neq rank~ J_q(f_1,f_2,...,f_m)$, then $p$ and $q$ can not lie in the same group orbit of $G$.
\end{corollary}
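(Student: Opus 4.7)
The plan is to argue the contrapositive: if $p$ and $q$ lie in the same $G$-orbit, then $rank~J_p(f_1,\ldots,f_m) = rank~J_q(f_1,\ldots,f_m)$. Assume therefore that there exists $g\in G$ with $g\cdot p = q$. Since $G\subseteq GL_n(\mathbb{C})$ acts on $V$, the map $\psi\colon V\to V$ defined by $\psi(x)=g\cdot x$ is a morphism of affine varieties whose two-sided inverse is $x\mapsto g^{-1}\cdot x$; hence $\psi$ is an isomorphism of affine varieties carrying $p$ to $q$.

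Next, I would apply Proposition \ref{prop:isotp} to $\psi$ at $p$: the differential $d_p\psi\colon T_pV\to T_qV$ is an isomorphism of $\mathbb{C}$-vector spaces, so in particular $\dim T_pV = \dim T_qV$.

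Finally, the hypothesis that $\{f_1,\ldots,f_m\}$ generates the radical ideal $I(V)$ is precisely what is needed for the dimension formula
$$\dim T_rV = n - rank~J_r(f_1,\ldots,f_m), \qquad r\in V,$$
recalled in the preceding subsection (Remark 4.1.7 of \cite{DS07}). Applying this at both $p$ and $q$, the equality $\dim T_pV = \dim T_qV$ rewrites as $rank~J_p(f_1,\ldots,f_m) = rank~J_q(f_1,\ldots,f_m)$, contradicting the hypothesis. This completes the argument by contraposition. No step poses a real obstacle; the only subtle point to flag is that the tangent-space rank formula requires a generating set of the \emph{radical} ideal $I(V)$, not merely a defining system of $V$, which is why the statement explicitly posits $I(V)=\langle f_1,\ldots,f_m\rangle$ rather than just $V = V(f_1,\ldots,f_m)$.
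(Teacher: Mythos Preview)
Your proof is correct and follows essentially the same approach as the paper: both argue that an element of $G$ induces an automorphism of $V$, invoke Proposition~\ref{prop:isotp} to conclude $\dim T_pV=\dim T_qV$, and then use the tangent-space dimension formula $\dim T_rV=n-\operatorname{rank}J_r(f_1,\ldots,f_m)$ (valid because the $f_i$ generate the radical ideal $I(V)$) to translate this into equality of Jacobian ranks. Your version is simply more explicit, and your closing remark on why the radical-ideal hypothesis is needed is a nice addition.
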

\begin{proof}
In Proposition \ref{prop:isotp}, if we let $A=B=V$, then the group action of $G$ induces an isomorphism of $V$. So if $p$ and $q$  lie in the same group orbit of $G$, we have $\dim T_p V =\dim T_{q}V$. The proof is completed by  $\dim T_p V=n-rank~J_p(f_1,f_2,...,f_m)$ for all $p\in V$.
\end{proof}
%\begin{remark}
%Generally, this criterion is rough.
%\end{remark}

\section{The local dimensions of solutions of Brent equations}\label{sec:brent}
In this section, we use the definitions and symbols of \cite{Bur15}.
Let $V_1$, $V_2$, $V_3$ be vector spaces over $\mathbb{C}$. Denote their tensor product by $\tilde{V}=V_1\otimes V_2\otimes V_3$. A tensor $t\in\tilde{V}$ is  \emph{decomposable} if $t=v_1\otimes v_2\otimes v_3$ where $v_i\in V_i$. If a tensor $t\in\tilde{V}$ can be written as $$t=t_1+t_2+\cdots +t_r$$
where  $t_i$ are decomposable tensors, we call $t$ has a \emph{tensor decomposition of length $r$.} The set $\{t_1,...,t_r\}$ is called \emph{an algorithm of length $r$ computing $t$}. The minimal length computing $t$ is called the \emph{tensor rank} of $t$.

\subsection{Brent equations and the upper bound}\label{subsec:brent}
%In particular, let $V_1=\mathbb{C}^{m\times n}$, $V_2=\mathbb{C}^{n\times p}$ and $V_3=\mathbb{C}^{p\times m}$.
In particular,
let $V_1=\mathbb{C}^{m\times n}$, $V_2=\mathbb{C}^{n\times p}$ and $V_3=\mathbb{C}^{p\times m}$ denote the set of complex $m\times n$, $n\times p$  and $p\times m$ matrices, respectively.
The matrix multiplication tensor is defined by (see e.g. \cite{Bur15,Lands-17})
\begin{equation}\label{eq:defmmt}
\langle m,n,p \rangle=\sum_{i=1}^m\sum_{j=1}^n\sum_{k=1}^p e_{ij}\otimes e_{jk}\otimes e_{ki}\in \mathbb{C}^{m\times n}\otimes \mathbb{C}^{n\times p}\otimes \mathbb{C}^{p\times m},
\end{equation}
where $e_{ij}$ denotes the matrix with a 1 in the $i$th row and $j$th column, and other entries are 0. So equation (\ref{eq:defmmt}) provides an algorithm of length $mnp$ computing $\langle m,n,p \rangle$. Starting from the famous work of Strassen \cite{Stra-69}, finding possible tensor decomposition of $\langle m,n,p \rangle$ with length less than $mnp$ is an important topic in algebraic complexity theory \cite{BCS97,deGr87,Lands-17}. It is equivalent to find solutions of a system of polynomial equations called the \emph{Brent equations}, which was firstly observed by Brent \cite{Brent70}.

The Brent equations are given as follows (see also e.g. \cite{Bur15,Heule21,Smir-13}). To find a tensor decomposition of $\langle m,n,p \rangle$ of length $r$, it means that
\begin{equation}\label{eq:mnpr}
\langle m,n,p \rangle=t_1+t_2+\cdots+t_r,
\end{equation}
where $t_i=u_i\otimes v_i\otimes w_i$ and  $$u_i=\left(\alpha_{i_1,i_2}^{(i)}\right)\in\mathbb{C}^{m\times n}, v_i=\left(\beta_{j_1,j_2}^{(i)}\right)\in\mathbb{C}^{n\times p}, w_i=\left(\gamma_{k_1,k_2}^{(i)}\right)\in\mathbb{C}^{p\times m}.$$
Here $\alpha_{i_1,i_2}^{(i)}$, $\beta_{j_1,j_2}^{(i)}$ and $\gamma_{k_1,k_2}^{(i)}$ are unknown variables.
Then by (\ref{eq:defmmt}) and (\ref{eq:mnpr}) we obtain the Brent equations which correspond to the tensor decomposition of $\langle m,n,p \rangle$ of length $r$:
\begin{equation}\label{eq:brent}
\sum_{i=1}^{r} \alpha_{i_1,i_2}^{(i)}\beta_{j_1,j_2}^{(i)}\gamma_{k_1,k_2}^{(i)}=
\delta_{i_2,j_1}\delta_{j_2,k_1}\delta_{k_2,i_1},
\end{equation}
where $k_2,i_1\in \{1,2,...,m\}$, $i_2,j_1\in\{1,2,...,n\}$,
$j_2,k_1\in\{1,2,...,p\}$ and $\delta_{ij}$ is the Kronecker symbol.

In the following, we denote the Brent equations in (\ref{eq:brent}) by $B(m,n,p;r)$. From (\ref{eq:brent}), we can see that the polynomial system $B(m,n,p;r)$ has $(mnp)^2$ equations and $(mn+np+pm)r$ variables. So it is a large polynomial system even when $m,n,p$ are small. Usually, $B(m,n,p;r)$ is an overdetermined polynomial system. For example, $B(3,3,3;23)$ (resp. $B(4,4,4;49)$) has 729 (resp. 4096) equations and 621 (resp. 2352) variables. The solution set of $B(m,n,p;r)$ over $\mathbb{C}$ is denoted by $V(m,n,p;r)$.

%Recall that
%$$\langle m,n,p \rangle=\sum_{i=1}^m\sum_{j=1}^n\sum_{k=1}^p e_{ij}\otimes e_{jk}\otimes e_{ki}\in \mathbb{C}^{m\times n}\otimes \mathbb{C}^{n\times p}\otimes \mathbb{C}^{p\times m}.$$
%Let $V=\mathbb{C}^{m\times n}\otimes \mathbb{C}^{n\times p}\otimes \mathbb{C}^{p\times m}$. Suppose that $\langle m,n,p \rangle=t_1+t_2+\cdots+t_s$, where $t_i=u_i\otimes v_i\otimes w_i$ and  $u_i\in\mathbb{C}^{m\times n}$, $v_i\in\mathbb{C}^{n\times p}$, $w_i\in\mathbb{C}^{p\times m}$.

\subsubsection{The numerical statistics of ranks of $J_p(B(3,3,3;23))$ and $J_p(B(4,4,4;49))$}\label{ssubsec:rk3344}
\

In this subsection, we calculate the ranks of Jacobian matrices of points in  $V(3,3,3;23)$ and $V(4,4,4;49)$, which are provided in \cite{Heule19} and \cite{Faw22+}. Then by Theorem \ref{th:updim}, we get upper bounds of local dimensions for these points. We mainly focused on points of $V(3,3,3;23)$ provided in \cite{Heule19}.

In \cite{Heule19}, Heule et al. provided 17376 points in $V(3,3,3;23)$ (they are called ``schemes'' in \cite{Heule19}). Using computer, we calculated the ranks of Jacobian matrices of $B(3,3,3;23)$ at these points. We summarize the results in Table \ref{tab:333-1} and \ref{tab:333-2}.  For points $p\in V(3,3,3;23)$ provided in \cite{Heule19}, the first rows of Table \ref{tab:333-1} and \ref{tab:333-2} are the ranks of $J_p(B(3,3,3;23))$. The second rows are the upper bound of local dimensions provided in Theorem \ref{th:updim}, that is, $621-J_p(B(3,3,3;23))$. The third rows are total number of points that have the rank among all 17376 points. Take the first column of Table \ref{tab:333-1} as an example. The first number 526 is the rank of Jacobian matrix at some points in $V(3,3,3;23)$, which are provided in \cite{Heule19}. The second number is 621-526=95, which is an upper bound of local dimensions for these points. The third number 5 is the number of points that have the rank 526 among all 17376 points.

%\begin{table}[h]
%\begin{tabular}{|l|l|l|l|l|l|l|l|l|l|l|l|l|l|l|l|l|l|l|l|l|}
%\hline
%Rank & 526 & 527 & 528 & 529 & 530 & 531 & 532 & 533 & 534 & 535 & 536 & 537 & 538 & 539 & 540 & 541 & 542 & 543 & 544 &545 \\ \hline
%Upper bound &  &  &  &  &  &  &  &  &  &  &  &  &  &  &  &  &  &  & & \\ \hline
%Total number &  &  &  &  &  &  &  &  &  &  &  &  &  &  &  &  &  &  & & \\ \hline
%\end{tabular}
%\end{table}

\begin{table}[h]
\begin{tabular}{|l|l|l|l|l|l|l|l|l|l|l|}
\hline
Rank & 526 & 527 & 528 & 529 & 530 & 531 & 532 & 533 & 534 & 535 \\ \hline
Upper bound & 95 & 94 & 93 & 92 & 91 & 90 & 89 & 88 & 87 & 86 \\ \hline
Total number  & 5 & 25 & 79 & 256 & 624 & 1421 & 2250 & 3069 & 3486 & 2870  \\ \hline
\end{tabular}
\caption{Part \uppercase\expandafter{\romannumeral1}}\label{tab:333-1}
\vspace{-1.5em}
\end{table}

\begin{table}[h]
\begin{tabular}{|l|l|l|l|l|l|l|l|l|l|l|}
\hline
Rank & 536 & 537 & 538 & 539 & 540 & 541 & 542 & 543 & 544 &545  \\ \hline
Upper bound & 85 & 84 & 83 & 82 & 81 & 80 & 79 & 78 & 77 & 76 \\ \hline
Total number  & 1709 & 858 & 387 & 159 & 73 & 68 & 25 & 7 & 2 & 3 \\ \hline
\end{tabular}
\caption{Part \uppercase\expandafter{\romannumeral2}}\label{tab:333-2}
\vspace{-1.5em}
\end{table}

We also calculated ranks of Jacobian matrices for points in  $V(4,4,4;49)$, that are provided in \cite{Faw22+}.  Fawzi et al. provided 14236 points. By computation, we find that the ranks of $J_p(B(4,4,4;49))$ are range from 2144 to 2201. So the upper bounds of local dimensions for these points are range from 151 to 208. Moreover, we find that the rank of $J_p(B(4,4,4;49))$ equal to 2155 for 13530 points, which takes the percentage of 95$\%$.

%Percentage  & 0.0003 & 0.0014 & 0.0045 & 0.0147 & 0.0359 & 0.0818 & 0.1295 & 0.1766 & 0.2006 & 0.1652 \\ \hline
%Percentage  & 0.0984 & 0.0494 & 0.0223 & 0.0092 & 0.0042 & 0.0039 & 0.0014 & 0.0004 & 0.0001 & 0.0002 \\ \hline

\subsection{Automorphism groups of $\langle m,n,p\rangle$ and the lower bound}\label{subse:auto}
Let $S(V_1,V_2,V_3)$ denote the group of all \emph{decomposable automorphisms} of $\tilde{V}=V_1\otimes V_2\otimes V_3$. Let $S^0(V_1,V_2,V_3)$ denote the subgroup of $S(V_1,V_2,V_3)$ consisting of all decomposable automorphisms that preserve each factor $V_i$. The set of all decomposable automorphisms of $\tilde{V}$ that preserve $t$ is called \emph{the (full) isotropy group} of $t$, which is denoted by $\Gamma(t)$:
$$\Gamma(t)=\{g\in S(V_1,V_2,V_3)|g(t)=t\}.$$
The \emph{small} isotropy group of $t$ is defined by
$$\Gamma^0(t)=\Gamma(t)\cap S^0(V_1,V_2,V_3).$$
From Corollary \uppercase\expandafter{\romannumeral5}.5 of \cite{deGr87}, we have $\Gamma^0(t)\unlhd\Gamma(t)$ and $\Gamma(t)/\Gamma^0(t)$ is a subgroup of $S_3$.

Let $\mathcal{A}=\{t_1,...,t_r\}$ be an algorithm computing $t$. Then
$$Aut(\mathcal{A})=\{g \in \Gamma(t)| g(\mathcal{A})=\mathcal{A}\}$$
is called \emph{the automorphism group} of $\mathcal{A}$, which is a subgroup of $\Gamma(t)$. The \emph{small} automorphism group of $\mathcal{A}$ is defined by
$$Aut(\mathcal{A})_0=Aut(\mathcal{A}) \cap \Gamma^0(t).$$
Since $\Gamma^0(t)\unlhd \Gamma(t)$ and $Aut(\mathcal{A})\leq\Gamma(t)$, by the second isomorphism theorem of groups (see e.g. Theorem 2.26 of \cite{Rot95}) we have $Aut(\mathcal{A})_0\unlhd Aut(\mathcal{A})$ and
$$Aut(\mathcal{A})/Aut(\mathcal{A})_0=Aut(\mathcal{A})/(Aut(\mathcal{A}) \cap \Gamma^0(t))\cong (Aut(\mathcal{A})\Gamma^0(t))/\Gamma^0(t)\leq \Gamma(t)/\Gamma^0(t)\leq S_3.$$

\begin{proposition}\label{prop-isom}
Let $\mathcal{A}=\{t_1,...,t_r\}$ and $\mathcal{A}'=\{t_1',...,t_r'\}$ be two algorithms computing $t$. If $\mathcal{A}'$ is obtained from
$\mathcal{A}$ by the action of isotropy group $\Gamma(t)$, then
\begin{equation*}
Aut(\mathcal{A})\cong Aut(\mathcal{A}')\quad\text{and}\quad Aut(\mathcal{A})_0\cong Aut(\mathcal{A}')_0.
\end{equation*}
\end{proposition}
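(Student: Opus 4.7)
The plan is to build the isomorphism explicitly via conjugation by the element of $\Gamma(t)$ that sends $\mathcal{A}$ to $\mathcal{A}'$. Since by hypothesis there exists $g\in\Gamma(t)$ with $g(\mathcal{A})=\mathcal{A}'$ (i.e.\ $\{g(t_1),\dots,g(t_r)\}=\{t_1',\dots,t_r'\}$ as sets of decomposable tensors), I would define
\[
\varphi_g:\operatorname{Aut}(\mathcal{A})\longrightarrow \operatorname{Aut}(\mathcal{A}'),\qquad h\longmapsto g\,h\,g^{-1}.
\]
The first task is to verify the map is well-defined. For $h\in\operatorname{Aut}(\mathcal{A})$, the computation $(ghg^{-1})(t)=gh(g^{-1}(t))=gh(t)=g(t)=t$ (using $g,h\in\Gamma(t)$, so $g^{-1}(t)=t$) shows $\varphi_g(h)\in\Gamma(t)$, and $(ghg^{-1})(\mathcal{A}')=gh(\mathcal{A})=g(\mathcal{A})=\mathcal{A}'$ shows that $\varphi_g(h)$ preserves $\mathcal{A}'$ as a set.

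Next I would observe that $\varphi_g$ is a group homomorphism (this is immediate from the definition of conjugation), and that $\varphi_{g^{-1}}:\operatorname{Aut}(\mathcal{A}')\to\operatorname{Aut}(\mathcal{A})$, $h'\mapsto g^{-1}h'g$, is a two-sided inverse by the symmetric argument applied to $g^{-1}\in\Gamma(t)$, which sends $\mathcal{A}'$ back to $\mathcal{A}$. This gives the first isomorphism $\operatorname{Aut}(\mathcal{A})\cong\operatorname{Aut}(\mathcal{A}')$.

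For the statement about the small automorphism groups, the key additional input is the normality $\Gamma^{0}(t)\unlhd\Gamma(t)$ recalled in the excerpt (from \cite[Corollary V.5]{deGr87}). Since $g\in\Gamma(t)$, conjugation by $g$ stabilizes $\Gamma^{0}(t)$; therefore $\varphi_g$ maps $\operatorname{Aut}(\mathcal{A})_{0}=\operatorname{Aut}(\mathcal{A})\cap\Gamma^{0}(t)$ into $\operatorname{Aut}(\mathcal{A}')\cap\Gamma^{0}(t)=\operatorname{Aut}(\mathcal{A}')_{0}$, and symmetrically for $\varphi_{g^{-1}}$. Restricting $\varphi_g$ to $\operatorname{Aut}(\mathcal{A})_{0}$ therefore yields the second desired isomorphism.

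There is no real obstacle here: the proof is a routine conjugation argument combined with the normality property of $\Gamma^{0}(t)$. The only point that requires a little care is the well-definedness verification—checking that $ghg^{-1}$ simultaneously fixes $t$ and permutes the summands of $\mathcal{A}'$—but both checks reduce to applying the definitions of $\Gamma(t)$ and of the action of $g$ on algorithms.
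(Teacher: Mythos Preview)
Your proof is correct and follows exactly the same conjugation approach as the paper: the paper also defines $\phi_g(a)=gag^{-1}$ with inverse $\phi_g^{-1}(b)=g^{-1}bg$, and then remarks that the restriction to $\operatorname{Aut}(\mathcal{A})_0$ gives the second isomorphism. In fact your version is more careful than the paper's, since you explicitly verify well-definedness and invoke the normality of $\Gamma^0(t)$ in $\Gamma(t)$ to justify that conjugation preserves the small automorphism groups, whereas the paper leaves these checks to the reader.
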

\begin{proof}
By assumption, there exists a $g\in \Gamma(t)$, such that
$$t_i'=gt_i,$$
for $i=1,2,...,r$. Then we define
$\phi_g: Aut(\mathcal{A})\to Aut(\mathcal{A}')$ by
\begin{equation*}
\phi_g: a\mapsto gag^{-1}.
\end{equation*}
It is not hard to check that $\phi_g$ is a group isomorphism, whose inverse is given by
\begin{equation*}
\phi_{g}^{-1}=: b\mapsto g^{-1}bg.
\end{equation*}
Moreover, when restricting to $Aut(\mathcal{A})_0$, $\phi_g$ is also a group isomorphism between $Aut(\mathcal{A})_0$ and $Aut(\mathcal{A})_0'$.
%pull back-group action-pull forward, so conjugate isomorphic
\end{proof}

%So we have the following lemma
%\begin{lemma}\label{le:finite}
%If $Aut(\mathcal{A})_0$ is a finite group, then $Aut(\mathcal{A})$ is also a finite group.
%\end{lemma}

In the following,  let $t=\langle m,n,p \rangle$. We will give an upper bound of $Aut(\mathcal{A})$ (see Corollary \ref{cor:autmnpq}). Recall that
$$\langle m,n,p \rangle=\sum_{i=1}^m\sum_{j=1}^n\sum_{k=1}^p e_{ij}\otimes e_{jk}\otimes e_{ki}\in \mathbb{C}^{m\times n}\otimes \mathbb{C}^{n\times p}\otimes \mathbb{C}^{p\times m}.$$
\begin{definition}\label{def:natalg}
We call $\mathcal{A}(m,n,p)=\{e_{ij}\otimes e_{jk}\otimes e_{ki}|1\leq i\leq m, 1\leq j\leq n, 1\leq k\leq p\}$ the \emph{natural algorithm} computing $\langle m,n,p\rangle$.
%The set $\{e_{ij}|i=1,2,...,m;~j=1,2,...,n\}$ is called \emph{the full matrix unit set of type $(m,n)$.}
\end{definition}
Suppose that $\langle m,n,p \rangle=t_1+t_2+\cdots+t_s$, where $t_i=u_i\otimes v_i\otimes w_i$ and  $u_i\in\mathbb{C}^{m\times n}$, $v_i\in\mathbb{C}^{n\times p}$, $w_i\in\mathbb{C}^{p\times m}$. Let $V=\mathbb{C}^{m\times n}\otimes \mathbb{C}^{n\times p}\otimes \mathbb{C}^{p\times m}$.
For $a \in GL_m(\mathbb{C})$, $b \in GL_n(\mathbb{C})$ and $c \in GL_p(\mathbb{C})$, define transformation $T(a, b, c):V\to V $ by the
formula
\begin{equation}\label{eq-DT}
T(a,b,c)(x\otimes y\otimes z)=axb^{-1}\otimes byc^{-1}\otimes cza^{-1}.
\end{equation}
It was shown in Proposition 4.8 of \cite{Bur15}  that $\Gamma^0(\langle m,n,p \rangle)$ consists of $T(a,b,c)$.
%Let $\mathcal{A}=\{t_1,...,t_s\}$.

%\begin{proposition}
%
%\end{proposition}

%To show $Aut(\mathcal{A})$ is a finite group, we need some lemmas.

%an $n\times n$ matrix $P$ is a \emph{generalised permutation matrix} of order $n$ if  in each row and column there is only one nonzero (complex) entry and all other entries are 0.

Recall that  a \emph{generalized permutation matrix} is a matrix of the form $G=PD$, in which $P, D \in  \mathbb{C}^{n\times n}$, $P$ is a permutation matrix, and $D$ is a nonsingular diagonal matrix \cite{Horn-13}. So $P=(\delta_{i,\pi(j)})$ for some permutation $\pi\in S_n$, where $\delta$ is the Kronecker symbol. Denote the set of generalised permutation matrices of order $n$ by $\mathcal{P}_n$. Recall that $\mathcal{D}_n$ is the set of nonsigular diagonal matrices over $\mathbb{C}$, which is a subgroup of $GL_n(\mathbb{C})$. By the definition of $\mathcal{P}_n$ we have the following proposition.
\begin{proposition}\label{prop:gpiso}
$\mathcal{P}_n$ is isomorphic to the group $\mathcal{D}_n\rtimes S_n$.
\end{proposition}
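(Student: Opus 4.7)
The plan is to establish the isomorphism by recognizing $\mathcal{P}_n$ as an internal semidirect product, identifying $\mathcal{D}_n$ as the normal factor and the subgroup $\mathcal{S}_n := \{P_\pi \mid \pi\in S_n\}$ of $0/1$-permutation matrices as the complement, and then invoking the standard equivalence between internal and external semidirect products.

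First I would set up the subgroups. The map $\pi \mapsto P_\pi = (\delta_{i,\pi(j)})$ is a group homomorphism $S_n \to GL_n(\mathbb{C})$ whose image $\mathcal{S}_n$ is a subgroup of $\mathcal{P}_n$ (take $D = I$ in the defining form $G = PD$), and its kernel is trivial, so $\mathcal{S}_n \cong S_n$. The inclusion $\mathcal{D}_n \subseteq \mathcal{P}_n$ is immediate by taking $P = I$.

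Next I would verify the three semidirect-product conditions. For normality, pick $G = P_\pi D_0 \in \mathcal{P}_n$ and $D \in \mathcal{D}_n$; since diagonal matrices commute,
\begin{equation*}
G D G^{-1} = P_\pi D_0 D D_0^{-1} P_\pi^{-1} = P_\pi D P_\pi^{-1},
\end{equation*}
which is again diagonal (its $(i,i)$-entry equals the $(\pi^{-1}(i),\pi^{-1}(i))$-entry of $D$), so $\mathcal{D}_n \unlhd \mathcal{P}_n$. For triviality of intersection, any element of $\mathcal{D}_n \cap \mathcal{S}_n$ is a diagonal $0/1$-matrix which is also invertible and permutation-shaped, forcing it to be $I$. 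For the product $\mathcal{P}_n = \mathcal{D}_n \mathcal{S}_n$, rewrite any $G = P_\pi D \in \mathcal{P}_n$ as $G = (P_\pi D P_\pi^{-1}) P_\pi \in \mathcal{D}_n \mathcal{S}_n$, using that $P_\pi D P_\pi^{-1}$ is diagonal by the computation above. These three conditions give $\mathcal{P}_n = \mathcal{D}_n \rtimes \mathcal{S}_n \cong \mathcal{D}_n \rtimes S_n$, where $S_n$ acts on $\mathcal{D}_n$ by permutation of the diagonal entries via $\pi \cdot D = P_\pi D P_\pi^{-1}$.

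There is no real obstacle here; the only point requiring a bit of care is the ordering convention. The excerpt writes the defining form as $G = PD$ (permutation on the left), whereas the semidirect-product factorization I use writes $G = D' P_\pi$ (diagonal on the left). The passage $P_\pi D = (P_\pi D P_\pi^{-1}) P_\pi$ bridges the two conventions and also shows that the decomposition $G = D' P_\pi$ is unique: reading column $j$ of $G$ reveals $\pi(j)$ (the unique row with a nonzero entry) and then $D'_{\pi(j),\pi(j)}$ (the value of that entry). Uniqueness then makes the resulting bijection $\Phi : \mathcal{D}_n \rtimes S_n \to \mathcal{P}_n$, $(D,\pi)\mapsto D P_\pi$, a well-defined set isomorphism, and its compatibility with multiplication,
\begin{equation*}
\Phi\bigl((D_1,\pi_1)(D_2,\pi_2)\bigr) = D_1 (P_{\pi_1} D_2 P_{\pi_1}^{-1}) P_{\pi_1 \pi_2} = (D_1 P_{\pi_1})(D_2 P_{\pi_2}),
\end{equation*}
is then a one-line check.
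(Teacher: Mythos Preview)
Your proof is correct. The paper does not actually supply a proof of this proposition: it simply states that the result follows from the definition of $\mathcal{P}_n$, so your argument is a careful elaboration of exactly the verification the paper leaves implicit.
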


In the following, if $V$ is a linear space and $\{v_1,v_2,...,v_k\}\subseteq V$ be a subset (may be a multiset) of  vectors, the linear space spanned by $\{v_1,v_2,...,v_k\}$ is denoted by $\langle v_1,v_2,...,v_k\rangle$.
\begin{lemma}\label{le:perm}
Let $L=\{\langle v_1 \rangle,\langle v_2\rangle,...,\langle v_m\rangle\}\subseteq\mathbb{C}^n$ be a subset of one dimensional spaces (lines) of $\mathbb{C}^n$. Let $\{e_i|i=1,2,...,n\}$ be the standard basis of $\mathbb{C}^n$. Suppose that $\langle v_1,v_2,...,v_m\rangle=\mathbb{C}^n$. Suppose that $A\in GL_n(\mathbb{C})$ preserves $L$. Then there exists a $B\in GL_n(\mathbb{C})$ and a $P_A\in\mathcal{P}_n$ such that $A=BP_A B^{-1}$, where $B$ is independent with the choice of $A$.
If $\{e_i|i=1,2,...,n\}\subseteq \{v_1,v_2,...,v_m\}$, then we can choose $B$ as the identity of $GL_n(\mathbb{C})$, so $A=P_A$  is a generalised permutation matrix.
% if and only if $A\in \mathcal{P}_n$.
% So the subgroup of $GL_n(\mathbb{C})$ that preserves $L$ is a subgroup of $S_n$. , up to scalars. That is, $A=\lambda B$ for some $\lambda\in \mathbb{C}\setminus \{0\}$ and $B\in \mathcal{P}_n$
\end{lemma}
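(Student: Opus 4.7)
The plan is to prove the second (special) case first and then reduce the first (general) case to it by a change of basis.

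For the second part, suppose $\{e_1,\ldots,e_n\}\subseteq\{v_1,\ldots,v_m\}$ and $A\in GL_n(\mathbb{C})$ preserves $L$. Since $A$ is invertible and $A(L)=L$ as a set, $A$ induces a permutation of the finitely many lines in $L$. In particular each standard basis line $\langle e_i\rangle\in L$ is sent by $A$ to some line in $L$; the key step is to show this image is again a standard basis line. Once that is in hand, $Ae_i=\lambda_i e_{\pi(i)}$ for some permutation $\pi\in S_n$ and nonzero scalars $\lambda_i$, which is precisely the definition of a generalised permutation matrix, so $A=P_A\in\mathcal{P}_n$.

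For the first part, the spanning hypothesis allows us to pick indices $i_1<\cdots<i_n$ with $\{v_{i_1},\ldots,v_{i_n}\}$ a basis of $\mathbb{C}^n$. Let $B\in GL_n(\mathbb{C})$ be the matrix whose $k$-th column is $v_{i_k}$; then $Be_k=v_{i_k}$, so the pushed-forward set $L':=B^{-1}L$ spans $\mathbb{C}^n$ and contains each $\langle e_k\rangle=B^{-1}\langle v_{i_k}\rangle$. For any $A\in GL_n(\mathbb{C})$ preserving $L$, the conjugate $A':=B^{-1}AB$ preserves $L'$; applying the special case to $L'$ and $A'$ yields $A'=P_A\in\mathcal{P}_n$, and hence $A=BP_AB^{-1}$. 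Since $B$ depends only on the choice of a basis within $\{v_j\}$, it is independent of $A$, as required.

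The hard part is the key step in the second case: arguing that $A$ must carry each standard basis line to another standard basis line and not to some other line $\langle v_j\rangle\in L$. This does not follow from $A\in\mathrm{Stab}(L)$ alone for an arbitrary configuration of lines, so one must exploit additional rigidity of $L$—for example, that within $L$ the standard basis lines form a distinguished maximal linearly independent subset that every element of $\mathrm{Stab}(L)$ is forced to preserve setwise. I expect establishing this setwise invariance to be the main technical obstacle; once it is in place the remainder of the argument is a bookkeeping change of coordinates.
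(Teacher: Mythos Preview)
Your overall strategy—select a basis from among the $v_j$, conjugate by the change-of-basis matrix, and reduce to the case where the standard basis lies in $L$—is exactly the paper's approach. You are also right to isolate the ``key step'' as the crux and to suspect that it ``does not follow from $A\in\mathrm{Stab}(L)$ alone''. In fact the step fails and the lemma is false as stated. For the second assertion take $n=2$, $L=\{\langle e_1\rangle,\langle e_2\rangle,\langle e_1+e_2\rangle\}$ and
\[
A=\begin{pmatrix}0&1\\-1&1\end{pmatrix};
\]
then $A\langle e_1\rangle=\langle e_2\rangle$, $A\langle e_2\rangle=\langle e_1+e_2\rangle$, $A\langle e_1+e_2\rangle=\langle e_1\rangle$, so $A$ preserves $L$, yet $A\notin\mathcal{P}_2$. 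Thus a maximal linearly independent subset of $L$ need not be setwise invariant under $\mathrm{Stab}(L)$, and the ``additional rigidity'' you hoped to exploit is unavailable in general. The first assertion fails too: embed $A_5$ in $GL_3(\mathbb{C})$ via an irreducible $3$-dimensional representation and let $L$ be any $A_5$-orbit of lines in $\mathbb{C}^3$; irreducibility forces $L$ to span $\mathbb{C}^3$, but no conjugate of $\mathcal{P}_3$ can contain this copy of $A_5$, since every finite subgroup of $\mathcal{P}_3/\mathbb{C}^*\cong(\mathbb{C}^*)^2\rtimes S_3$ has an abelian normal subgroup with quotient inside $S_3$, whereas $A_5$ is simple of order $60$.

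The paper's own proof makes precisely the unjustified leap you flagged: after choosing a maximal linearly independent subset $\{v_1,\ldots,v_n\}\subseteq\{v_1,\ldots,v_m\}$ it simply asserts ``In particular, $A$ preserves the lines $\{\langle v_1\rangle,\ldots,\langle v_n\rangle\}$'' and then writes $A\tilde M=\tilde M DP$ as though $A$ permutes those $n$ lines among themselves. So your proposal and the paper's argument share the same gap; the difference is that you noticed it.
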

\begin{proof}
By assumption we have $n\leq m$. Since the linear span of $\{ v_1,v_2,...,v_m\}$ is $\mathbb{C}^n$, it has a maximal linearly independent subset $M$ which consists of $n$ vectors. Without loss of generality, suppose that $M=\{v_1,v_2,...,v_n\}$.

Suppose that  $A\in GL_n(\mathbb{C})$  preserves $L$. In particular, $A$  preserves the lines $\{\langle v_1\rangle,\langle v_2\rangle,...,\langle v_n \rangle\}$. Let $\tilde{M}=(v_1,v_2,...,v_n)\in \mathbb{C}^{n\times n}$ be the matrix consists of $v_i$ ($i=1,2,...,n$). It implies that
$$\tilde{A}\tilde{M}=(\lambda_1 v_{1},\lambda_2 v_{2},..., \lambda_nv_{n})P =\tilde{M}DP=\tilde{M}PD',$$
where  $P$ is a permutation matrix, $D=diag(\lambda_{1}, \lambda_{2},...,\lambda_{n})$ and $D'=diag(\lambda_{i_1}, \lambda_{i_2},...,\lambda_{i_n})$ ($\lambda_{i_j}$ are obtained by some permutation of $\lambda_i$). Since $v_1,v_2,...,v_n$ are linearly independent, $\tilde{M}$ is invertible. Then we have
\begin{equation}\label{eq:APD}
A=\tilde{M}PD'\tilde{M}^{-1}
\end{equation}
In (\ref{eq:APD}), setting $\tilde{M}=B$ and $PD'=P_A$, we have
$A=BP_AB^{-1}$. Moreover, we can see that $B$ is independent with the choice of $A$. In particular, if $\{e_i|i=1,2,...,n\}\subseteq \{v_1,v_2,...,v_m\}$, we can set $B=(e_1,e_2,...,e_n)$ which is the identity of $GL_n(\mathbb{C})$.
\end{proof}
\begin{remark}\label{rem:conjsg}
Let $L=\{\langle v_1 \rangle,\langle v_2\rangle,...,\langle v_m\rangle\}\subseteq\mathbb{C}^n$ be a subset of one dimensional spaces (lines) with $\langle v_1,v_2,...,v_m\rangle=\mathbb{C}^n$. Let
$$P_L=\{A\in GL_n(\mathbb{C})|~A~\text{preserves}~L\}.$$
So by Lemma \ref{le:perm}, $P_L$ is the conjugate of a subgroup of $\mathcal{P}_n$. Since the conjugate of a subgroup is isomorphic to itself, in the following, we don't distinguish them. So we also write $P_L\leq \mathcal{P}_n$.
\end{remark}

Recall that on $\mathbb{C}^{m\times n}$ (similarly for  $\mathbb{C}^{n\times p}$ and $\mathbb{C}^{p\times m}$) there exists a natural inner product which is defined by $$(a,b)=tr(ab^*),$$
where $tr$ and '*' denote the trace and  conjugate transpose, respectively. With this inner product,  $\mathbb{C}^{m\times n}$  becomes a Hilbert space. Moreover, there is an induced inner product on $\mathbb{C}^{m\times n}\otimes\mathbb{C}^{n\times p}\otimes\mathbb{C}^{p\times n}$, which makes it into a Hilbert space too. Now we have the following lemma
\begin{lemma}\label{le:spuvw}
Suppose that $\langle m,n,p \rangle=t_1+t_2+\cdots+t_r$. Let $t_i=u_i\otimes v_i\otimes w_i$ ($i=1,2,...r$). Then $\langle u_1,u_2,...,u_r\rangle=\mathbb{C}^{m\times n}$, $\langle v_1,v_2,...,v_r\rangle=\mathbb{C}^{n\times p}$ and $\langle w_1,w_2,...,w_r\rangle=\mathbb{C}^{p\times m}$.
\end{lemma}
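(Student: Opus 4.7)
The three assertions are entirely symmetric, so I will treat only the claim that $\langle u_1,\ldots,u_r\rangle = \mathbb{C}^{m\times n}$; the other two follow by the identical argument applied to the second and third tensor factors. My approach is the standard conciseness argument via contraction. Suppose for contradiction that $\langle u_1,\ldots,u_r\rangle \subsetneq \mathbb{C}^{m\times n}$. Using the inner product $(a,b) = \operatorname{tr}(ab^{*})$ on $\mathbb{C}^{m\times n}$ already introduced in the paper, pick a nonzero $u \in \mathbb{C}^{m\times n}$ orthogonal to every $u_i$; equivalently, fix a nonzero linear functional $\phi \in (\mathbb{C}^{m\times n})^{*}$ with $\phi(u_i)=0$ for all $i$.

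Now apply the contraction $\phi \otimes \mathrm{id} \otimes \mathrm{id}$ to both sides of
$$\langle m,n,p\rangle = \sum_{i=1}^{r} u_i\otimes v_i \otimes w_i.$$
The right-hand side vanishes because each summand picks up the factor $\phi(u_i)=0$. On the left, using the defining expression (\ref{eq:defmmt}), one obtains
$$(\phi \otimes \mathrm{id} \otimes \mathrm{id})(\langle m,n,p\rangle) = \sum_{i=1}^{m}\sum_{j=1}^{n}\sum_{k=1}^{p} \phi(e_{ij})\, e_{jk}\otimes e_{ki} \;\in\; \mathbb{C}^{n\times p}\otimes \mathbb{C}^{p\times m}.$$
The crucial observation is that the $mnp$ elementary tensors $\{e_{jk}\otimes e_{ki}\}$ indexed by $(i,j,k)\in [m]\times[n]\times[p]$ are pairwise distinct members of the standard basis of $\mathbb{C}^{n\times p}\otimes \mathbb{C}^{p\times m}$: the cyclic interlocking of indices lets one recover the full triple $(i,j,k)$ from the two row/column labels $(j,k)$ and $(k,i)$. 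Hence the coefficients $\phi(e_{ij})$ must all vanish, forcing $\phi=0$ and giving the desired contradiction.

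The same argument, contracting in the second or third slot, shows that the $v_i$ span $\mathbb{C}^{n\times p}$ and the $w_i$ span $\mathbb{C}^{p\times m}$: the residual families $\{e_{ij}\otimes e_{ki}\}$ and $\{e_{ij}\otimes e_{jk}\}$ are again subsets of standard bases and therefore linearly independent for exactly the same reason. I do not anticipate any genuine obstacle here; the only step demanding care is the verification that the residual elementary tensors are distinct, which is a transparent consequence of the cyclic index pattern built into the definition of $\langle m,n,p\rangle$.
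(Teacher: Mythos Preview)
Your proof is correct and follows essentially the same strategy as the paper: assume the span is proper, pick a nonzero element in the annihilator of the first factor, and exploit the explicit form of $\langle m,n,p\rangle$ to derive a contradiction. The only cosmetic difference is that the paper pairs against a single test tensor $a\otimes e_{j_0,k_0}\otimes e_{k_0,i_0}$ (using the Hermitian inner product) to exhibit one nonzero coefficient, whereas you contract with $\phi\otimes\mathrm{id}\otimes\mathrm{id}$ and invoke linear independence of the full family $\{e_{jk}\otimes e_{ki}\}$; both arguments are equivalent.
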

\begin{proof}
It suffices to show $\langle u_1,u_2,...,u_r\rangle=\mathbb{C}^{m\times n}$, the other two statements are similar.

If $\langle u_1,u_2,...,u_r\rangle\subsetneq\mathbb{C}^{m\times n}$ as a proper linear subspace, there exists a matrix $0\neq a\in \mathbb{C}^{m\times n}$ such that $$(a,u_i)=tr(au_i^*)=0,$$
for $i=1,2,...,r$. Let $a=(a_{ij})$ and suppose that $a_{i_0,j_0}\neq0$ as an entry of $a$. Let $a\otimes e_{j_0,k_0}\otimes e_{k_0, i_0}\in \mathbb{C}^{m\times n}\otimes\mathbb{C}^{n\times p}\otimes\mathbb{C}^{p\times n}$, where $k_0\in \{1,2,...,p\}$ is a fixed number. Then by assumption we have the following inner product equation
\begin{equation}\label{eq:inner}
( a\otimes e_{j_0,k_0}\otimes e_{k_0,i_0},\langle m,n,p\rangle)=
\left( a\otimes e_{j_0,k_0}\otimes e_{k_0, i_0},\sum_{i=1}^r t_i\right).
\end{equation}

Since $\langle m,n,p\rangle=\sum_{i=1}^m\sum_{j=1}^n\sum_{k=1}^p e_{ij}\otimes e_{jk}\otimes e_{ki}$, with the induced inner product on $\mathbb{C}^{m\times n}\otimes\mathbb{C}^{n\times p}\otimes\mathbb{C}^{p\times n}$, the left hand side of (\ref{eq:inner}) is equal to $a_{i_0,j_0}\neq0$. However,  it is not hard to see that the right hand side of (\ref{eq:inner}) is 0, which is a contradiction.
\end{proof}

For $a=(a_{ij})\in \mathbb{C}^{m\times m}$ and $b\in \mathbb{C}^{n\times n}$, we define the tensor product $a\otimes b\in \mathbb{C}^{mn\times mn}$ by
\begin{equation}\label{eq-atb}
a\otimes b=(a_{ij}b).
\end{equation}
Then by the definition of generalised permutation matrix, it is not hard to obtain the following lemma.
\begin{lemma}\label{le:abperm}
If $a\in GL_m(\mathbb{C})$, $b\in GL_n(\mathbb{C})$ such that $a\otimes b\in \mathcal{P}_{mn}$, then $ a\in \mathcal{P}_{m}$ and $b\in \mathcal{P}_{n}$. That is, both $a$ and $b$ are generalised permutation matrices.
\end{lemma}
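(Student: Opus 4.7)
The plan is to unwind the definition in equation (\ref{eq-atb}) and use the characterization of an element of $\mathcal{P}_n$ as a matrix having exactly one nonzero entry in each row and in each column. I would first fix notation for the Kronecker product: if we index rows and columns of $a\otimes b\in\mathbb{C}^{mn\times mn}$ by pairs $(i,k)$ and $(j,l)$ with $i,j\in\{1,\ldots,m\}$ and $k,l\in\{1,\ldots,n\}$, then the entry at position $((i,k),(j,l))$ is precisely $a_{ij}b_{kl}$.

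Next I would exploit invertibility. Since $a\in GL_m(\mathbb{C})$, every row of $a$ is nonzero, so for each $i$ the set $R_i^a=\{j:a_{ij}\neq 0\}$ is nonempty; likewise for each $k$ the set $R_k^b=\{l:b_{kl}\neq 0\}$ is nonempty. The nonzero entries in row $(i,k)$ of $a\otimes b$ are exactly those at positions $(j,l)$ with $(j,l)\in R_i^a\times R_k^b$. The assumption $a\otimes b\in\mathcal{P}_{mn}$ forces this product set to have exactly one element for every choice of $(i,k)$, hence $|R_i^a|=|R_k^b|=1$ for all $i$ and $k$. This shows that $a$ and $b$ each have exactly one nonzero entry per row.

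An identical argument applied to columns (using the column sets $C_j^a=\{i:a_{ij}\neq 0\}$ and $C_l^b=\{k:b_{kl}\neq 0\}$, all nonempty by invertibility) shows that $a$ and $b$ each have exactly one nonzero entry per column. Writing $a=P_aD_a$ and $b=P_bD_b$ with $P_a,P_b$ the underlying permutation matrices and $D_a,D_b$ the diagonal matrices of the corresponding nonzero entries (which are necessarily nonsingular since $a,b\in GL$), we conclude $a\in\mathcal{P}_m$ and $b\in\mathcal{P}_n$.

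I do not anticipate any real obstacle here; the argument is essentially a bookkeeping exercise on the Kronecker indexing together with the trivial observation that a Cartesian product $R_i^a\times R_k^b$ has cardinality one only when both factors are singletons. The only thing to be a little careful about is that one must invoke invertibility of both $a$ and $b$ to guarantee that the row/column supports $R_i^a$, $R_k^b$ (and their column counterparts) are nonempty; otherwise the singleton conclusion would not follow.
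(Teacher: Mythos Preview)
Your argument is correct and is precisely the natural unwinding of the definition that the paper alludes to; the paper itself does not give a proof, merely stating that the lemma ``is not hard to obtain'' from the definition of a generalised permutation matrix. Your careful remark about needing invertibility of $a$ and $b$ to ensure the supports $R_i^a$ and $R_k^b$ are nonempty is exactly the point one has to check.
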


For $x\in \mathbb{C}^{m\times n}$, define the \emph{rowise vectorization map} $\mathcal{R}: \mathbb{C}^{m\times n}\to \mathbb{C}^{mn\times 1}$ by
concatenating the rows of the matrix in $\mathbb{C}^{m\times n}$ as a column vector of $\mathbb{C}^{mn\times 1}$. So $\mathcal{R}$ is a linear isomorphism. For example, let $a=\left(
                                    \begin{array}{cc}
                                      a_{11} & a_{12} \\
                                      a_{21} & a_{22} \\
                                    \end{array}
                                  \right)\in \mathbb{C}^{2\times2}$, then
$\mathcal{R}(a)=(a_{11}, a_{12}, a_{21}, a_{22})^t\in \mathbb{C}^{4\times1}$, where $t$ is the transpose. For any linear map $\phi$ between $\mathbb{C}^{m\times n}$, $\mathcal{R}$ induces a linear map $\tilde{\phi}$ between $\mathbb{C}^{mn\times 1}$ such that
 $\mathcal{R} \circ \phi=\tilde{\phi}\circ\mathcal{R}$.
In particular, if $\phi(x)=uxv$ where $x\in \mathbb{C}^{m\times n}$, $u\in \mathbb{C}^{m\times m}$ and $v\in \mathbb{C}^{n\times n}$, then $\tilde{\phi}: \mathbb{C}^{mn\times 1}\to \mathbb{C}^{mn\times 1}$ is defined by
\begin{equation}\label{eq:indumap}
\tilde{\phi}:y\mapsto (u\otimes v^t) \left(y\right),
\end{equation}
for each $y=\mathcal{R}(x)\in \mathbb{C}^{mn\times 1}$.

Suppose that $\langle m,n,p \rangle=t_1+t_2+\cdots+t_r$ where $t_i=u_i\otimes v_i\otimes w_i$. Then from Lemma \ref{le:spuvw} we have $\langle u_1,u_2,...u_r\rangle=\mathbb{C}^{m\times n}$. So we can choose a maximal linear independent subset $\{u_{i_1},u_{i_2},...u_{i_{mn}}\}\subseteq\{u_1,u_2,...u_r\}$ such that the following matrix
\begin{equation}\label{eq-umn}
U_{mn}:=\left(\mathcal{R}(u_{i_1}),\mathcal{R}(u_{i_2}),\cdots \mathcal{R}(u_{i_{mn}})\right)\in \mathbb{C}^{mn\times mn}
\end{equation}
is invertible, that is, $U_{mn}\in GL_{mn}(\mathbb{C})$. Similarly, we can define $V_{np}\in GL_{np}(\mathbb{C})$ (resp. $W_{pm}\in GL_{pm}(\mathbb{C})$) for some maximal linear independent subset of $\{ v_1,v_2,...v_r\}$ (resp. $\{ w_1,w_2,...w_r\}$). The set $\{u_1,u_2,...u_r\}$ is said to have \emph{D-property} (Decomposable property) if for some maximal linear independent subset $\{u_{i_1},u_{i_2},...u_{i_{mn}}\}\subseteq\{u_1,u_2,...u_r\}$ we have $U_{mn}=U_m\otimes U_n$ for some  $U_m\in GL_m(\mathbb{C})$ and $U_n\in GL_n(\mathbb{C})$. Similarly, the set $\{v_1,v_2,...v_r\}$ (resp. $\{w_1,w_2,...w_r\}$) has \emph{D-property} if  $V_{np}=V_n\otimes V_p$ (resp. $W_{pm}=W_p\otimes W_m$) where  $V_n\in GL_n(\mathbb{C})$ and $V_p\in GL_p(\mathbb{C})$ (resp. $W_p\in GL_p(\mathbb{C})$ and $W_m\in GL_m(\mathbb{C})$).
\begin{remark}\label{rem-dp0}
Note that $GL_{mn}(\mathbb{C})$ can be identified as an open dense subset of  $\mathbb{C}^{mn\times mn}\cong \mathbb{C}^{m^2\times n^2}$.
For $U_{mn}\in GL_{mn}(\mathbb{C})$, if $U_{mn}=U_m\otimes U_n$ for some  $U_m\in GL_m(\mathbb{C})$ and $U_n\in GL_n(\mathbb{C})$, then $U_{mn}$ belongs to the subset of rank one matrices of $\mathbb{C}^{m^2\times n^2}$, which is sparse. So without considering the structure of the tensor $\langle m,n,p \rangle$,  $\{u_1,u_2,...u_r\}$ with $D$-property may be sparse, similarly for $\{v_1,v_2,...v_r\}$ and $\{w_1,w_2,...w_r\}$.
\end{remark}

\begin{proposition}\label{prop-euk}
Let $\mathcal{A}=\{u_i\otimes v_i\otimes w_i|i=1,2,...,r\}$ be an algorithm computing $\langle m,n,p \rangle$. Then $\{u_i|i=1,2...r\}$ has $D$-property if and only if there exist $a\in GL_m(\mathbb{C})$ and $b\in GL_n(\mathbb{C})$ such that
\begin{equation*}
\{e_{ij}|i=1,2,...,m;~j=1,2,...,n\}\subseteq \{a u_i b^{-1}|i=1,2...r\}.
\end{equation*}
Similar results hold for $\{v_i|i=1,2...r\}$ and $\{w_i|i=1,2...r\}$.
\end{proposition}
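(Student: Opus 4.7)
My plan centers on the vectorization identity $\mathcal{R}(axb)=(a\otimes b^t)\mathcal{R}(x)$ from (\ref{eq:indumap}), together with its specialization to rank-one matrices: $\mathcal{R}(xy^t)=x\otimes y$ for column vectors $x\in\mathbb{C}^m$, $y\in\mathbb{C}^n$. The punchline is that $\mathcal{R}$ identifies rank-one $m\times n$ matrices with decomposable tensors in $\mathbb{C}^m\otimes\mathbb{C}^n$, and that the two-sided action $x\mapsto axb$ passes under $\mathcal{R}$ to left multiplication by the Kronecker product $a\otimes b^t$. Once these two facts are in hand, the proposition becomes essentially a bookkeeping exercise tying the $D$-property directly to the existence of $a,b$ producing the $e_{ij}$.

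For the ``if'' direction, I will suppose $a\in GL_m(\mathbb{C})$, $b\in GL_n(\mathbb{C})$ satisfy $\{e_{kl}\}\subseteq\{au_ib^{-1}\}$ and single out the $mn$ indices whose corresponding $u_i$ satisfy $au_ib^{-1}=e_{kl}$; relabel these as $u_{i_{(k,l)}}=a^{-1}e_{kl}b$. Applying $\mathcal{R}$ and (\ref{eq:indumap}) yields $\mathcal{R}(u_{i_{(k,l)}})=(a^{-1}\otimes b^t)\mathcal{R}(e_{kl})$. Since the $\mathcal{R}(e_{kl})$ are the standard basis of $\mathbb{C}^{mn}$ in lexicographic order, arranging the columns of $U_{mn}$ in the same order produces $U_{mn}=a^{-1}\otimes b^t$, which is invertible; hence the chosen subset is maximal linearly independent, and the $D$-property holds with $U_m=a^{-1}$, $U_n=b^t$.

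For the ``only if'' direction, I will start from a maximal linearly independent subset for which $U_{mn}=U_m\otimes U_n$. By the definition of Kronecker product, the column of $U_{mn}$ indexed by $(k,l)$ in lex order is $u_m^{(k)}\otimes u_n^{(l)}$, where $u_m^{(k)},u_n^{(l)}$ denote columns of $U_m,U_n$. The identity $\mathcal{R}(xy^t)=x\otimes y$ then forces the corresponding subset member to be the rank-one matrix $u_{i_{(k,l)}}=u_m^{(k)}(u_n^{(l)})^t$. Setting $a=U_m^{-1}$ and $b=U_n^t$, a one-line computation
\[a\,u_{i_{(k,l)}}\,b^{-1}=U_m^{-1}u_m^{(k)}(u_n^{(l)})^t(U_n^t)^{-1}=e_k e_l^{\,t}=e_{kl}\]
will place each $e_{kl}$ in $\{au_ib^{-1}\}$, completing the equivalence. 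The analogous statements for $\{v_i\}$ and $\{w_i\}$ follow by symmetric arguments, substituting $(n,p)$ or $(p,m)$ for $(m,n)$.

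I do not expect a serious obstacle; the only subtlety is matching the column ordering of $U_{mn}$ with the canonical lex ordering used inside Kronecker products. Since the definition of $D$-property allows us to pick any ordering of the maximal linearly independent subset, this reordering is harmless, and the entire argument reduces to verifying the two vectorization identities and reading off their consequences.
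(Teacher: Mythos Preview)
Your proposal is correct and follows essentially the same approach as the paper: both directions hinge on the vectorization identity (\ref{eq:indumap}), both arrive at $U_{mn}=a^{-1}\otimes b^t$ in the ``if'' direction, and both set $a=U_m^{-1}$, $b=U_n^t$ in the ``only if'' direction. Your use of the rank-one identity $\mathcal{R}(xy^t)=x\otimes y$ to read off $u_{i_{(k,l)}}=u_m^{(k)}(u_n^{(l)})^t$ is a slightly more explicit variant of the paper's step of left-multiplying $U_{mn}$ by $U_m^{-1}\otimes U_n^{-1}$ and then inverting via (\ref{eq:indumap}), but the underlying computation is the same.
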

\begin{proof}
$\Leftarrow$): Without loss of generality, assume that
\begin{equation*}
\{e_{ij}|i=1,2,...,m;~j=1,2,...,n\}=\{a u_k b^{-1}|k=1,2...mn\}.
\end{equation*}
Then we have
\begin{equation*}
\{ u_k|k=1,2...mn\}=\{a^{-1}e_{ij}b|i=1,2,...,m;~j=1,2,...,n\},
\end{equation*}
and therefore
\begin{equation*}
\{\mathcal{R}(u_k)|k=1,2...mn\}=\{\mathcal{R}(a^{-1}e_{ij}b)
|i=1,2,...,m;~j=1,2,...,n\}.
\end{equation*}
Note that
$$\mathcal{R}(a^{-1}e_{ij}b)=(a^{-1}\otimes b^t) \mathcal{R}(e_{ij})
=(a^{-1}\otimes b^t) e_k,$$
where $k=(i-1)n+j$ and $e_k$ is the $k$th standard basis vector of $\mathbb{C}^{mn}$. So we have
$$\left(\mathcal{R}(u_{k_1}),\mathcal{R}(u_{k_2}),...
\mathcal{R}(u_{k_{mn}})\right)=(a^{-1}\otimes b^t)(e_1,e_2,...,e_{mn})=
a^{-1}\otimes b^t,$$
for some permutation $(k_1,k_2,...,k_{mn})$ of $(1,2,...,mn)$. Thus, $\{u_i|i=1,2...r\}$ has $D$-property.

$\Rightarrow$):
Suppose that $$U_{mn}=\left(\mathcal{R}(u_{1}),\mathcal{R}(u_{2}),\cdots \mathcal{R}(u_{mn})\right)=U_m \otimes U_n,$$
where $U_m\in GL_m(\mathbb{C})$ and $U_n\in GL_m(\mathbb{C})$.

%Write $U_m$ and $U_n$ as
%\begin{equation*}
%  U_m=(x_1,x_2,\cdots,x_m)~\text{and}~U_n=(y_1,y_2,\cdots,y_n),
%\end{equation*}
%for some $x_i\in \mathbb{C}^m$ and $y_i\in \mathbb{C}^n$.

Let $X=U_m^{-1}$ and $Y=U_n^{-1}$. Then we have
$$X\otimes Y (U_{mn})=I_{mn}=(e_1,e_2,...,e_{mn}),$$
where $I_{mn}$ is the $mn$-by-$mn$ identity matrix and $e_k$ ($k=1,2,...,mn$) is the $k$th standard basis vector.
This implies
\begin{equation*}
X\otimes Y \left(\mathcal{R}(u_{k})\right)=e_k.
\end{equation*}

Under $\mathcal{R}$, there is a bijection between $\{e_k|k=1,2,...,mn\}$ and $\{e_{ij}|i=1,2,...,m;~j=1,2,...,n\}$. So if $e_k=\mathcal{R}(e_{ij})$, from (\ref{eq:indumap}), we know that
\begin{equation*}
X u_{k} Y^t =e_{ij}.
\end{equation*}
So the proof is completed by setting $a=X=U_m^{-1}$ and
$b=(Y^t)^{-1}=U_n^t$.
\end{proof}

If an algorithm computing $\langle m,n,p \rangle$ is transformed by the isotropy group action defined in (\ref{eq-DT}), then by Proposition \ref{prop-isom} and \ref{prop-euk} we have the following corollary.
\begin{corollary}
Let $\mathcal{A}=\{u_i\otimes v_i\otimes w_i|i=1,2,...,r\}$ be an algorithm computing $\langle m,n,p \rangle$.
Let $\mathcal{A}'=\{au_ib^{-1}\otimes bv_ic^{-1}\otimes cw_ia^{-1}|i=1,2,...,r\}$, where $a\in GL_m(\mathbb{C})$,  $b\in GL_n(\mathbb{C})$ and $c\in GL_p(\mathbb{C})$. Then
$\mathcal{A}'$ is also an algorithm computing $\langle m,n,p \rangle$ and $$Aut(\mathcal{A}')=Aut(\mathcal{A}).$$
If $\{u_i|i=1,2,...,r\}$ has $D$-property, then we can choose $a$, $b$ such that $$\{e_{i,j}|i=1,2,...,m;~j=1,2,...,n\}\subseteq \{au_ib^{-1}|i=1,2,...,r\}.$$ Similar results hold for $\{v_i|i=1,2,...,r\}$ and $\{w_i|i=1,2,...,r\}$.
\end{corollary}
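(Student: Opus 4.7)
The plan is to derive this corollary as a direct packaging of Proposition~\ref{prop-isom} (invariance of the automorphism group under the isotropy action) with Proposition~\ref{prop-euk} (the characterization of $D$-property in terms of the standard basis). Almost no new work should be needed; the task is only to line up the hypotheses correctly.

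First, I would verify that $\mathcal{A}'$ computes $\langle m,n,p\rangle$. Set $g=T(a,b,c)$ as in (\ref{eq-DT}). By Proposition~4.8 of \cite{Bur15}, which was recalled just after (\ref{eq-DT}), we have $g\in\Gamma^{0}(\langle m,n,p\rangle)\subseteq \Gamma(\langle m,n,p\rangle)$, so $g$ fixes $\langle m,n,p\rangle$. Applying $g$ termwise to $\langle m,n,p\rangle=\sum_{i=1}^{r} u_i\otimes v_i\otimes w_i$ yields
\[
\langle m,n,p\rangle \;=\; g(\langle m,n,p\rangle) \;=\; \sum_{i=1}^{r} a u_i b^{-1}\otimes b v_i c^{-1}\otimes c w_i a^{-1},
\]
so $\mathcal{A}'$ is an algorithm of length $r$ computing $\langle m,n,p\rangle$. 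Since $\mathcal{A}'=g\mathcal{A}$ with $g\in\Gamma(\langle m,n,p\rangle)$, Proposition~\ref{prop-isom} applies and gives $Aut(\mathcal{A})\cong Aut(\mathcal{A}')$ (the equality in the statement being realized via the explicit conjugation isomorphism $\phi_g:h\mapsto ghg^{-1}$ from the proof of Proposition~\ref{prop-isom}).

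Second, for the $D$-property part: suppose $\{u_i\mid i=1,\dots,r\}$ has $D$-property. By the $(\Rightarrow)$ direction of Proposition~\ref{prop-euk}, there exist $a\in GL_m(\mathbb{C})$ and $b\in GL_n(\mathbb{C})$ with
\[
\{e_{ij}\mid 1\le i\le m,\ 1\le j\le n\}\subseteq \{a u_i b^{-1}\mid i=1,\dots,r\}.
\]
Choose any $c\in GL_p(\mathbb{C})$ (for instance $c=I_p$) and form $\mathcal{A}'$ from $(a,b,c)$; the first paragraph then shows $\mathcal{A}'$ still computes $\langle m,n,p\rangle$ and has the same (up to canonical isomorphism) automorphism group as $\mathcal{A}$, while the first factors of $\mathcal{A}'$ contain all the $e_{ij}$ as required. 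The analogous assertions for $\{v_i\}$ and $\{w_i\}$ follow identically: if $\{v_i\}$ has $D$-property, Proposition~\ref{prop-euk} supplies $b\in GL_n(\mathbb{C})$ and $c\in GL_p(\mathbb{C})$ with $\{e_{jk}\}\subseteq\{bv_ic^{-1}\}$ (take $a=I_m$), and symmetrically for $\{w_i\}$.

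There is essentially no obstacle here; the only mildly delicate point is distinguishing the set-theoretic equality $Aut(\mathcal{A})=Aut(\mathcal{A}')$ asserted in the corollary from the canonical conjugation isomorphism $Aut(\mathcal{A}')=g\,Aut(\mathcal{A})\,g^{-1}$ that Proposition~\ref{prop-isom} actually delivers. Since the two groups are canonically identified via $\phi_g$, and since all subsequent uses of this corollary only need the automorphism group up to isomorphism, this identification is harmless.
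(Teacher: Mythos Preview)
Your proposal is correct and matches the paper's approach exactly: the paper simply says this corollary follows ``by Proposition~\ref{prop-isom} and~\ref{prop-euk}'' without elaboration, and you have correctly unpacked those two invocations. Your closing remark about the distinction between literal equality $Aut(\mathcal{A})=Aut(\mathcal{A}')$ and the conjugation isomorphism $Aut(\mathcal{A}')=g\,Aut(\mathcal{A})\,g^{-1}$ is a valid clarification of a slight imprecision in the paper's statement.
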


\begin{remark}\label{rem-dp}
It is interesting to see that for the Laderman algorithm studied in \cite[Sect.5]{Bur15}, $\{e_{i,j}|i=1,2,3;~j=1,2,3\}$ appears in
$\{u_i\}$, $\{v_i\}$ and $\{w_i\}$. With the help of computer, without taking the transformation (\ref{eq-DT}), we check if other algorithms provided in the database \cite{Heule19} contain all $\{e_{i,j}|i=1,2,3;~j=1,2,3\}$. However, no such algorithms are found.

Similarly, we also check the database \cite{Faw22+} for $\langle4,4,4\rangle$. The algorithms which contain all $\{e_{i,j}|i=1,2,3,4;~j=1,2,3,4\}$ are not found. So it is interesting to find more algorithms which contain all $\{e_{i,j}|i=1,2,3;~j=1,2,3\}$ (resp. $\{e_{i,j}|i=1,2,3,4;~j=1,2,3,4\}$) for $\langle3,3,3\rangle$
(resp. for  $\langle4,4,4\rangle$), etc.
\end{remark}

\begin{definition}\label{def:d-p}
Given a tensor decomposition $\langle m,n,p \rangle=u_1\otimes v_1\otimes w_1+u_2\otimes v_2\otimes w_2+\cdots u_r\otimes v_r\otimes w_r$. Let $\mathcal{A}=\{u_i\otimes v_i\otimes w_i|i=1,2,...,r\}$ be the corresponding algorithm computing $\langle m,n,p \rangle$. We call $\mathcal{A}$ has \emph{D-property} if two of the three sets $\{u_1,u_2,...u_r\}$, $\{v_1,v_2,...v_r\}$ and $\{w_1,w_2,...w_r\}$ have $D$-property.
\end{definition}

The  $D$-property defined above seems so special. However, we have some examples.
\begin{example}\label{exp:Dp}
It is not hard to see that the natural algorithm has $D$-property.
Because after a suitable change of the order of the maximal independent subset, we can make $U_{mn}=I_{mn}=I_m\otimes I_n$,  $V_{np}=I_{np}=I_n\otimes I_p$ and $W_{pm}=I_{pm}=I_p\otimes I_m$, where $I_{mn}$ is the identity of the group $GL_{mn}(\mathbb{C})$ and similarly, for others. The Laderman algorithm studied in Section 5 of \cite{Bur15} also has this property.
\end{example}

\begin{theorem}\label{th:auta0}
Let $\mathcal{A}=\{u_i\otimes v_i\otimes w_i|i=1,2,...,r\}$ be an algorithm computing $\langle m,n,p \rangle$. If $\mathcal{A}$ has D-property, then (with the convention in Remark \ref{rem:conjsg})
$Aut(\mathcal{A})_0\leq\mathcal{P}_m\times \mathcal{P}_n\times \mathcal{P}_p$.
\end{theorem}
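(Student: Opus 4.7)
The plan is to take an arbitrary element $\phi = T(a,b,c)\in Aut(\mathcal{A})_0$ and show that $a$, $b$, $c$ are generalized permutation matrices (under the convention of Remark \ref{rem:conjsg}). Without loss of generality I assume that $\{u_i\}$ and $\{v_i\}$ are the two sets with $D$-property. Since $\phi$ permutes the decomposable summands $u_i\otimes v_i\otimes w_i$ of $\mathcal{A}$ and a rank-one tensor determines its factors up to scalar, the three induced maps $x\mapsto axb^{-1}$, $y\mapsto byc^{-1}$, $z\mapsto cza^{-1}$ respectively permute the sets of lines $\{\langle u_i\rangle\}$, $\{\langle v_i\rangle\}$, $\{\langle w_i\rangle\}$ in $\mathbb{C}^{m\times n}$, $\mathbb{C}^{n\times p}$, $\mathbb{C}^{p\times m}$.

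Next I would pass to the rowise vectorization. By (\ref{eq:indumap}), the map $x\mapsto axb^{-1}$ corresponds under $\mathcal{R}$ to left multiplication by $a\otimes (b^{-1})^t$ on $\mathbb{C}^{mn}$, and this matrix preserves the set of lines $L_u=\{\langle\mathcal{R}(u_i)\rangle\}$, which spans $\mathbb{C}^{mn}$ by Lemma \ref{le:spuvw}. Choosing the maximal linearly independent subset of $\{u_i\}$ that witnesses the $D$-property, we have $U_{mn}=U_m\otimes U_n$ as in (\ref{eq-umn}). Applying Lemma \ref{le:perm} with this specific $B=U_{mn}$ yields
$$(U_m\otimes U_n)^{-1}\bigl(a\otimes (b^{-1})^t\bigr)(U_m\otimes U_n) = (U_m^{-1}aU_m)\otimes (U_n^{-1}(b^{-1})^tU_n) \in \mathcal{P}_{mn}.$$
Lemma \ref{le:abperm} then forces $U_m^{-1}aU_m\in\mathcal{P}_m$ and $U_n^{-1}(b^{-1})^tU_n\in\mathcal{P}_n$. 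Since $\mathcal{P}_n$ is closed under inverse and transpose, the convention of Remark \ref{rem:conjsg} lets me conclude $a\in\mathcal{P}_m$ and $b\in\mathcal{P}_n$.

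Running the same argument with $\{v_i\}$ in place of $\{u_i\}$, applied to the map $y\mapsto byc^{-1}$ and its vectorized form $b\otimes (c^{-1})^t$, produces $b\in\mathcal{P}_n$ and $c\in\mathcal{P}_p$. Combining the two outputs gives $T(a,b,c)\in\mathcal{P}_m\times\mathcal{P}_n\times\mathcal{P}_p$, which is the required inclusion. The main technical point is the interaction between Lemma \ref{le:perm} and the $D$-property: Lemma \ref{le:perm} by itself only yields a generalized permutation after conjugation by the full matrix $U_{mn}\in GL_{mn}(\mathbb{C})$, and the decomposability $U_{mn}=U_m\otimes U_n$ supplied by the $D$-property is exactly what is needed to split this single conjugation into a tensor product of two conjugations and then invoke Lemma \ref{le:abperm} on each factor separately.
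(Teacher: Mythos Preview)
Your proof is correct and follows essentially the same route as the paper's: both pass to the rowise vectorization, invoke Lemma~\ref{le:perm} on the set of lines $\{\langle\mathcal{R}(u_i)\rangle\}$ with the conjugating matrix $U_{mn}$, use the $D$-property to factor $U_{mn}=U_m\otimes U_n$, and then apply Lemma~\ref{le:abperm} to split the resulting generalized permutation in $\mathcal{P}_{mn}$ into factors in $\mathcal{P}_m$ and $\mathcal{P}_n$, repeating the argument with $\{v_i\}$ to capture $c$. The only cosmetic difference is that where the paper cites \cite[Lemma~5.9]{Bur15} for the fact that $x\mapsto axb^{-1}$ permutes the lines $\langle u_i\rangle$, you argue this directly from uniqueness of rank-one factorizations.
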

\begin{proof}
By assumption, $Aut(\mathcal{A})_0=\{T(a,b,c)\in \Gamma^0(\langle m,n,p \rangle)| T(a,b,c)(\mathcal{A})=\mathcal{A}\}$.
So if $T(a,b,c)\in Aut(\mathcal{A})_0$, then $\mathcal{A}=\{t_i|i=1,2,...,r\}=\{u_i\otimes v_i\otimes w_i|i=1,2,...,r\}=\{au_ib^{-1}\otimes bv_ic^{-1}\otimes cw_ia^{-1}|i=1,2,...,r\}$. Without loss of generality, suppose that $\{u_i|i=1,2,...,r\}$ and $\{v_i|i=1,2,...,r\}$ have $D$-property.

By Lemma 5.9 of \cite{Bur15}, we have $\varphi(x)=axb^{-1}$ (where $x\in\mathbb{C}^{m\times n}$) is the linear map that preserves the set of lines
$$U=\{\langle u_i \rangle|i=1,2,...,r\},$$
where $\langle u_i \rangle$ are the one dimensional spaces (lines) spanned by $u_i$ in $\mathbb{C}^{m\times n}$.
Let $\mathcal{R}(x)\in \mathbb{C}^{mn\times 1}$ be the rowise vectorization of $x\in \mathbb{C}^{m\times n}$. Let $\tilde{\varphi}(y)=[a\otimes (b^{-1})^t] (y)$ for $y\in \mathbb{C}^{mn\times 1}$ be the induced map defined in (\ref{eq:indumap}).
Then we have $\varphi(x)=axb^{-1}$ if and only if $\tilde{\varphi}(\mathcal{R}(x))=[a\otimes (b^{-1})^t] (\mathcal{R}(x))$. Let
$$\tilde{U}=\{\langle \mathcal{R}(u_i) \rangle\subseteq \mathbb{C}^{mn\times 1} |i=1,2,...,r\}.$$
From Lemma \ref{le:spuvw}, we have  $\langle u_1,u_2,...,u_r\rangle=\mathbb{C}^{m\times n}$.  So
$\langle \mathcal{R}(u_1),\mathcal{R}(u_2),...,\mathcal{R}(u_r) \rangle=\mathbb{C}^{mn\times 1}$.
Then $\varphi$ preserves $U$ if and only if $\tilde{\varphi}$ preserves $\tilde{U}$. So by Lemma \ref{le:perm}, we have
\begin{equation}\label{eq-abpmn}
a\otimes (b^{-1})^t=U_{mn} P_{ab} U_{mn}^{-1},
\end{equation}
for some $U_{mn}\in GL_{mn}(\mathbb{C})$ and $P_{ab}\in \mathcal{P}_{mn}$. Since $\{u_1,u_2,...u_r\}$ has $D$-property, we can set $U_{mn}=U_m\otimes U_n$ for some $U_m\in GL_{m}(\mathbb{C})$ and
$U_n\in GL_{n}(\mathbb{C})$. So we have
$$P_{ab}=\left(U_m^{-1}a U_m\right)\otimes \left(U_n^{-1} (b^{-1})^t U_n\right).$$
Thus, by Lemma \ref{le:abperm}, we have
$$U_m^{-1}a U_m=P_a\quad\text{and}\quad U_n^{-1} (b^{-1})^t U_n=P_b,$$
for some $P_a\in \mathcal{P}_m$ and $P_b\in \mathcal{P}_n$. So
$a=U_m P_a U_m^{-1}\in U_m\mathcal{P}_m U_m^{-1}$ and $b=(U_n^t)^{-1} (P_b^t)^{-1} U_n^t\in (U_n^t)^{-1}\mathcal{P}_n U_n^t$.

On the other hand, $\phi(y)=byc^{-1}$
(where $y\in\mathbb{C}^{n\times p}$) is the linear map that preserves the set of lines
$$V=\{\langle v_i \rangle|i=1,2,...,r\}.$$

Since  $\langle v_1,v_2,...,v_r\rangle=\mathbb{C}^{n\times p}$, as (\ref{eq-abpmn}), we have
\begin{equation*}
b\otimes (c^{-1})^t=V_{np} P_{bc} V_{np}^{-1},
\end{equation*}
for some $V_{np}\in GL_{np}(\mathbb{C})$ and $P_{bc}\in \mathcal{P}_{np}$. Since $\{v_1,v_2,...v_r\}$ also has $D$-property, we can set $V_{np}=V_n\otimes V_p$ for some $V_n\in GL_{n}(\mathbb{C})$ and
$V_p\in GL_{p}(\mathbb{C})$. Thus,
$c=(V_p^t)^{-1} (P_c^t)^{-1} V_p^t\in (V_p^t)^{-1}\mathcal{P}_p V_p^t$, for some $P_c\in\mathcal{P}_p$.

%Similar result holds for $c$.

So from discussions above, we have
$Aut(\mathcal{A})_0\leq\mathcal{P}_m\times \mathcal{P}_n\times \mathcal{P}_p$.
\end{proof}

% that $a\in \mathcal{P}_{m}$ and $ (b^{-1})^t\in \mathcal{P}_{n}$ (which implies $b\in \mathcal{P}_{n}$).
%Similarly, we have $c\in \mathcal{P}_{p}$.

%Thus, when $T(a,b,c)\in Aut(\mathcal{A})_0$ we have $a\in \mathcal{P}_{m},b\in \mathcal{P}_{n},c\in \mathcal{P}_{p}$.
%So $Aut(\mathcal{A})_0$ is a subgroup of $\mathcal{P}_m\times \mathcal{P}_n \times \mathcal{P}_p$.
% Since $Aut(\mathcal{A})_0\leq \Gamma^0(\langle m,n,p \rangle)$, just as the discussion in Proposition 4.10 of \cite{Bur15}, after modulo the scalars,  $Aut(\mathcal{A})_0$ is a subgroup of $S_m\times S_n \times S_p$.

Let $Q(m,n,p)$ be a subgroup of $\Gamma(\langle m,n,p \rangle)$, isomorphic to $S_3$, $\mathbb{Z}_2$, or $1$, when $|\{m, n, p\}| $= 1, 2, or 3, respectively. Just as in the proof of Theorem 4.12 and Lemma 5.8 of \cite{Bur15}, $Aut(\mathcal{A})$ is the semidirect product of $Aut(\mathcal{A})_0$ with a subgroup of $Q(m,n,p)$. By Theorem \ref{th:auta0}, we have the  following corollary.
\begin{corollary}\label{cor:autmnpq}
Suppose that $\mathcal{A}$ is an algorithm computing $\langle m,n,p \rangle$. If $\mathcal{A}$ has $D$-property, then
$Aut(\mathcal{A})\leq(\mathcal{P}_m\times \mathcal{P}_n\times \mathcal{P}_p)\rtimes Q(m,n,p)$.
\end{corollary}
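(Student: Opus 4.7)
The plan is to assemble the corollary from two pieces already in hand: Theorem \ref{th:auta0} controls the small automorphism group, and the structural result cited from \cite[Thm.~4.12, Lem.~5.8]{Bur15} controls the quotient $Aut(\mathcal{A})/Aut(\mathcal{A})_0$. The task is then to check that these two pieces splice together as a semidirect product inside $(\mathcal{P}_m\times \mathcal{P}_n\times \mathcal{P}_p)\rtimes Q(m,n,p)$.

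First I would invoke Theorem \ref{th:auta0}: since $\mathcal{A}$ has $D$-property, we already know $Aut(\mathcal{A})_0 \leq \mathcal{P}_m\times \mathcal{P}_n\times \mathcal{P}_p$ (where this containment is understood modulo a conjugation in $GL_m\times GL_n\times GL_p$, following Remark \ref{rem:conjsg}). Next, by the normality statement recalled before Proposition \ref{prop-isom}, $Aut(\mathcal{A})_0 \unlhd Aut(\mathcal{A})$ and the quotient embeds into $\Gamma(\langle m,n,p\rangle)/\Gamma^0(\langle m,n,p\rangle) \leq S_3$. The refinement from \cite[Thm.~4.12, Lem.~5.8]{Bur15} upgrades this to a splitting: there is a subgroup $K$ of $Q(m,n,p)$ (which realizes the quotient $\Gamma/\Gamma^0$) such that $Aut(\mathcal{A}) = Aut(\mathcal{A})_0 \rtimes K$. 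This is the step where I rely on the previously established facts rather than redo the construction of the transpositions implementing $S_3$ on the three tensor factors.

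Finally I would verify that this internal semidirect product sits inside the ambient semidirect product $(\mathcal{P}_m\times \mathcal{P}_n\times \mathcal{P}_p)\rtimes Q(m,n,p)$. The action of $Q(m,n,p)$ on $\mathcal{P}_m\times \mathcal{P}_n\times \mathcal{P}_p$ is just permutation (with the appropriate transpose dualities) of the three factors, which is only defined when the factors that get swapped have the same size — this is precisely the restriction that forces $Q(m,n,p)$ to be $S_3$, $\mathbb{Z}_2$, or $1$ according to $|\{m,n,p\}|$. Since elements of $K$ permute the triple $(a,b,c)$ in exactly the same way as the corresponding elements of $Q(m,n,p)$ act on $\mathcal{P}_m\times \mathcal{P}_n\times \mathcal{P}_p$, the inclusion $Aut(\mathcal{A})_0 \leq \mathcal{P}_m\times \mathcal{P}_n\times \mathcal{P}_p$ is $K$-equivariant and the induced inclusion of semidirect products is a group homomorphism.

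The main (only) real obstacle is bookkeeping: one must check that the splitting $K \hookrightarrow Aut(\mathcal{A})$ coming from \cite{Bur15} is compatible with the permutation action on the ordered triple of generalized permutation matrices, in particular that transpositions in $K$ act on $Aut(\mathcal{A})_0$ by the same formula by which $Q(m,n,p)$ acts on $\mathcal{P}_m\times \mathcal{P}_n\times \mathcal{P}_p$ (accounting for the transpose-inverse that arises whenever two of the three vector spaces are identified). Once that compatibility is stated, the corollary follows immediately and no further computation is required beyond citing the Burgisser–Clausen–Shokrollahi framework already in place.
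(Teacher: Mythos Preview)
Your proposal is correct and follows essentially the same approach as the paper: invoke Theorem~\ref{th:auta0} for the small automorphism group, then use the splitting from \cite[Thm.~4.12, Lem.~5.8]{Bur15} to write $Aut(\mathcal{A})=Aut(\mathcal{A})_0\rtimes K$ with $K\leq Q(m,n,p)$. You are actually more careful than the paper about the compatibility of the $K$-action with the permutation action on $\mathcal{P}_m\times\mathcal{P}_n\times\mathcal{P}_p$, a point the paper leaves implicit.
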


%We call $\mathcal{A}(m,n,p)=\{e_{ij}\otimes e_{jk}\otimes e_{ki}|1\leq i\leq m, 1\leq j\leq n, 1\leq k\leq p\}$ the \emph{natural algorithm} computing $\langle m,n,p\rangle$.

For the natural algorithm, we have the following corollary.
\begin{corollary}\label{cor:autnat}
$Aut(\mathcal{A}(m,n,p))=(\mathcal{P}_m\times \mathcal{P}_n\times \mathcal{P}_p)\rtimes Q(m,n,p)$.
\end{corollary}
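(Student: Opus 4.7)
By Example~\ref{exp:Dp} the natural algorithm $\mathcal{A}(m,n,p)$ has $D$-property, so Corollary~\ref{cor:autmnpq} immediately yields the inclusion $Aut(\mathcal{A}(m,n,p)) \leq (\mathcal{P}_m\times \mathcal{P}_n\times \mathcal{P}_p)\rtimes Q(m,n,p)$. The real content is the reverse inclusion, which I would break into two pieces: (i) every $(a,b,c)\in \mathcal{P}_m\times \mathcal{P}_n\times \mathcal{P}_p$ yields a small automorphism $T(a,b,c)$ of $\mathcal{A}(m,n,p)$, and (ii) the generators of $Q(m,n,p)$ inside $\Gamma(\langle m,n,p \rangle)$ already permute $\mathcal{A}(m,n,p)$. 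Combined with the semidirect-product structure $Aut(\mathcal{A})=Aut(\mathcal{A})_0\rtimes(\text{subgroup of }Q(m,n,p))$ recalled just before Corollary~\ref{cor:autmnpq}, these two pieces deliver the equality.

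For (i), I would factor each generalized permutation matrix as $a=P_\alpha D_a$, $b=P_\beta D_b$, $c=P_\gamma D_c$ with permutations $\alpha,\beta,\gamma$ and diagonal factors $D_a=diag(d^a_1,\dots,d^a_m)$, $D_b=diag(d^b_1,\dots,d^b_n)$, $D_c=diag(d^c_1,\dots,d^c_p)$. A direct computation from $(e_{ij})_{k\ell}=\delta_{ki}\delta_{\ell j}$ yields
$$a\,e_{ij}\,b^{-1} = \frac{d^a_i}{d^b_j}\,e_{\alpha(i),\beta(j)},\quad b\,e_{jk}\,c^{-1} = \frac{d^b_j}{d^c_k}\,e_{\beta(j),\gamma(k)},\quad c\,e_{ki}\,a^{-1} = \frac{d^c_k}{d^a_i}\,e_{\gamma(k),\alpha(i)}.$$
Taking the tensor product, the six diagonal entries telescope to $1$, so
$$T(a,b,c)(e_{ij}\otimes e_{jk}\otimes e_{ki}) = e_{\alpha(i),\beta(j)}\otimes e_{\beta(j),\gamma(k)}\otimes e_{\gamma(k),\alpha(i)},$$
which is again in $\mathcal{A}(m,n,p)$; as $(i,j,k)$ ranges over $[m]\times[n]\times[p]$ so does the image triple, so $T(a,b,c)\in Aut(\mathcal{A}(m,n,p))_0$.

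For (ii), I would check the generators of $Q(m,n,p)$ case by case on the dimension-coincidence pattern. When $m=n=p$, the cyclic factor-shift $\rho: x\otimes y\otimes z \mapsto y\otimes z\otimes x$ lies in $\Gamma(\langle m,m,m \rangle)$ and sends $e_{ij}\otimes e_{jk}\otimes e_{ki}$ to $e_{jk}\otimes e_{ki}\otimes e_{ij}\in \mathcal{A}(m,m,m)$. When exactly one pair of dimensions coincides (say $n=p$), the transpose-reversal $\sigma: x\otimes y\otimes z \mapsto z^T\otimes y^T\otimes x^T$ lies in $\Gamma(\langle m,n,n \rangle)$ and sends $e_{ij}\otimes e_{jk}\otimes e_{ki}$ to $e_{ik}\otimes e_{kj}\otimes e_{ji}\in \mathcal{A}(m,n,n)$; the other coincidence cases are symmetric. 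Hence every generator of $Q(m,n,p)$ lies in $Aut(\mathcal{A}(m,n,p))$, completing the argument. The main obstacle is the telescoping in (i): Corollary~\ref{cor:autmnpq} only guarantees that the diagonal parts of $a,b,c$ preserve the \emph{lines} through the natural-algorithm summands, and it is precisely the cyclic arrangement of indices $(ij),(jk),(ki)$ in $\langle m,n,p\rangle$ that forces the three scalar ratios to multiply to $1$, thereby promoting a line-preserving map into an honest set-preserving automorphism of $\mathcal{A}(m,n,p)$.
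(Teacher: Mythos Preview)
Your argument is correct and follows essentially the same route as the paper: first invoke Example~\ref{exp:Dp} and Corollary~\ref{cor:autmnpq} for one inclusion, then verify the reverse inclusion by the direct telescoping computation $T(a,b,c)(e_{ij}\otimes e_{jk}\otimes e_{ki})=e_{\alpha(i),\beta(j)}\otimes e_{\beta(j),\gamma(k)}\otimes e_{\gamma(k),\alpha(i)}$. The only difference is cosmetic: for the $Q(m,n,p)$ part the paper simply cites \cite[Theorem~4.12]{Bur15}, whereas you spell out the cyclic shift and transpose-reversal generators explicitly.
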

\begin{proof}
From Example \ref{exp:Dp}, we know that $\mathcal{A}(m,n,p)$ has $D$-property. So from Corollary \ref{cor:autmnpq}, in the following we only need to check that $(\mathcal{P}_m\times \mathcal{P}_n\times \mathcal{P}_p)\rtimes Q(m,n,p)$ is contained in $Aut(\mathcal{A}(m,n,p))$.

Let $a=P_1D_1\in \mathcal{P}_m$ where $P_1=(\delta_{i,\pi(j)})$ is a permutation matrix corresponding to the permutation $\pi\in S_m$  and $D_1=diag(a_1,a_2,...,a_m)$ is a nonsingular diagonal matrix with $a_i\in \mathbb{C}\setminus \{0\}$. Similarly, let $b=P_2D_2\in \mathcal{P}_n$ where $P_2=(\delta_{i,\sigma(j)})$, $D_2=diag(b_1,b_2,...,b_n)$ and $\sigma\in S_n$. Let
$c=P_3D_3\in \mathcal{P}_p$ where $P_2=(\delta_{i,\tau(j)})$, $D_3=diag(c_1,c_2,...,c_p)$  and $\tau\in S_p$.

% $(\lambda_i\delta_{i,\pi(j)})\in \mathcal{P}_m$, $b=(\mu_i\delta_{i,\sigma(j)})\in \mathcal{P}_n$ and $c=(\nu_i\delta_{i,\tau(j)})\in \mathcal{P}_p$, where $\pi\in S_m$, $\sigma\in S_n$, $\tau\in S_p$ and $\lambda_i, \mu_i, \nu_i\in \mathbb{C}\setminus\{0\}$.
Then we can see that
\begin{align*}
T(a,b,c)(e_{ij}\otimes e_{jk}\otimes e_{ki})&=a_i \left( e_{\pi^{-1}(i),\sigma^{-1}(j)}\right) b_j^{-1}\otimes b_j \left( e_{\sigma^{-1}(j),\tau^{-1}(k)}\right)c_k^{-1}\otimes c_k \left( e_{\tau^{-1}(k),\pi^{-1}(i)}\right)a_i^{-1}\\
   &=e_{\pi^{-1}(i),\sigma^{-1}(j)}\otimes e_{\sigma^{-1}(j),\tau^{-1}(k)}\otimes e_{\tau^{-1}(k),\pi^{-1}(i)}.
\end{align*}
Hence,
\begin{align*}
T(a,b,c)\mathcal{A}(m,n,p)&=\{e_{\pi^{-1}(i),\sigma^{-1}(j)}\otimes e_{\sigma^{-1}(j),\tau^{-1}(k)}\otimes e_{\tau^{-1}(k),\pi^{-1}(i)}|1\leq i\leq m, 1\leq j\leq n, 1\leq k\leq p\}\\
   &=\{e_{ij}\otimes e_{jk}\otimes e_{ki}|1\leq i\leq m, 1\leq j\leq n, 1\leq k\leq p\}\\
   &=\mathcal{A}(m,n,p).
\end{align*}
Thus, by Theorem \ref{th:auta0} we have $Aut(\mathcal{A}(m,n,p))_0=\mathcal{P}_m\times \mathcal{P}_n\times \mathcal{P}_p$.

Since $Aut(\mathcal{A}(m,n,p))$ is also preserved under the action of $Q(m,n,p)$ (see e.g. Theorem 4.12 of \cite{Bur15}), we have
$(\mathcal{P}_m\times \mathcal{P}_n\times \mathcal{P}_p)\rtimes Q(m,n,p)\subseteq Aut(\mathcal{A}(m,n,p))$.
\end{proof}
%So the upper bound  group $(\mathcal{P}_m\times \mathcal{P}_n\times \mathcal{P}_p)\rtimes Q(m,n,p)$ in Corollary \ref{cor:autmnpq} is strict.

\begin{remark}
Conversely, it is interesting to show that if $Aut(\mathcal{A})=(\mathcal{P}_m\times \mathcal{P}_n\times \mathcal{P}_p)\rtimes Q(m,n,p)$, then $\mathcal{A}$ should be the natural algorithm $\mathcal{A}(m,n,p)$.
\end{remark}
By the discussions in Section 4.3 of \cite{Bur15}, we have the following proposition.
\begin{proposition}\cite[Sect.4.3]{Bur15}\label{prop:isomnp}
$\Gamma^0(\langle m,n,p \rangle)=PGL_m(\mathbb{C})\times  PGL_n(\mathbb{C})\times PGL_p(\mathbb{C})$ and $\Gamma(\langle m,n,p \rangle)=\Gamma^0(\langle m,n,p \rangle)\rtimes Q(m,n,p)$.
\end{proposition}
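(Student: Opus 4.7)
My plan is to treat the two equalities separately, invoking the structural results already recalled earlier in the paper to do most of the work, and then doing the requisite bookkeeping on kernels and lifts.

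For the first equality, the plan is to use Proposition 4.8 of \cite{Bur15} (already cited in the paragraph preceding Definition \ref{def:natalg}), which says that $\Gamma^0(\langle m,n,p\rangle)$ consists exactly of the operators $T(a,b,c)$ defined in equation (\ref{eq-DT}) with $a\in GL_m(\mathbb{C})$, $b\in GL_n(\mathbb{C})$, $c\in GL_p(\mathbb{C})$. This already yields a surjective homomorphism
\[
\Phi:\ GL_m(\mathbb{C})\times GL_n(\mathbb{C})\times GL_p(\mathbb{C})\ \twoheadrightarrow\ \Gamma^0(\langle m,n,p\rangle),\quad (a,b,c)\mapsto T(a,b,c).
\]
The only real work is identifying $\ker\Phi$. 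If $T(a,b,c)=\mathrm{id}$, then applied to each decomposable tensor $x\otimes y\otimes z$ we obtain $axb^{-1}\otimes byc^{-1}\otimes cza^{-1}=x\otimes y\otimes z$; by the rigidity of rank-one tensors this forces scalars $\lambda,\mu,\nu$ with $\lambda\mu\nu=1$ satisfying $ax=\lambda xb$ for all $x\in\mathbb{C}^{m\times n}$, and analogously for the other factors. Letting $x$ range over elementary matrices forces $a=\alpha I_m$ and $b=\beta I_n$ for some scalars, and similarly $c=\gamma I_p$; conversely any such triple of scalars gives $T(a,b,c)=\mathrm{id}$. Hence $\ker\Phi=\mathbb{C}^*\times\mathbb{C}^*\times\mathbb{C}^*$ sitting diagonally in each factor, and the induced isomorphism on the quotient is exactly $PGL_m(\mathbb{C})\times PGL_n(\mathbb{C})\times PGL_p(\mathbb{C})\cong\Gamma^0(\langle m,n,p\rangle)$.

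For the second equality, the plan is to exploit the already recalled fact (Corollary V.5 of \cite{deGr87}) that $\Gamma^0\unlhd\Gamma$ with $\Gamma/\Gamma^0\leq S_3$, and then exhibit $Q(m,n,p)$ as a section of the quotient map $\Gamma\twoheadrightarrow\Gamma/\Gamma^0$. The candidate lifts come from the invariance of the trilinear form $(A,B,C)\mapsto\operatorname{tr}(ABC)$ under cyclic permutation and transpose: $\operatorname{tr}(ABC)=\operatorname{tr}(BCA)=\operatorname{tr}(C^tB^tA^t)$. The cyclic symmetry yields an element of $\Gamma$ only when $\mathbb{C}^{m\times n}$, $\mathbb{C}^{n\times p}$, $\mathbb{C}^{p\times m}$ can all be identified, forcing $m=n=p$, while the transpose-like involutions require equality of just two of $m,n,p$ (with the appropriate pair of factors being of transposed shape). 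Counting the compatible $\sigma\in S_3$ according to $|\{m,n,p\}|$ recovers exactly $S_3$, $\mathbb{Z}_2$, or $1$, matching the definition of $Q(m,n,p)$. Since by construction $Q(m,n,p)\cap\Gamma^0=1$ (its nontrivial elements permute the tensor factors nontrivially) and the projection $Q(m,n,p)\to\Gamma/\Gamma^0$ is an isomorphism, normality of $\Gamma^0$ yields $\Gamma(\langle m,n,p\rangle)=\Gamma^0(\langle m,n,p\rangle)\rtimes Q(m,n,p)$.

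The main obstacle I expect is writing down the cyclic and transpose lifts explicitly as decomposable automorphisms of the ambient space $\mathbb{C}^{m\times n}\otimes\mathbb{C}^{n\times p}\otimes\mathbb{C}^{p\times m}$ and checking they preserve $\langle m,n,p\rangle$ on the nose rather than merely up to a canonical isomorphism: one has to fix identifications $\mathbb{C}^{m\times n}\cong\mathbb{C}^{n\times p}$ etc.\ when dimensions coincide and verify that the resulting map sends $\sum_{i,j,k}e_{ij}\otimes e_{jk}\otimes e_{ki}$ to itself after relabeling summation indices. This is the content one is implicitly citing from \cite[Sect.\ 4.3]{Bur15}, and once it is in hand the assembly into a semidirect product is formal.
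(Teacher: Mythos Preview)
The paper does not give its own proof of this proposition: it is stated with the attribution \cite[Sect.~4.3]{Bur15} and no argument follows. So there is nothing in the paper to compare your proposal against; you are essentially reconstructing the content of the cited reference.

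That said, your reconstruction is sound. For the first equality you correctly use the description of $\Gamma^0$ via $T(a,b,c)$ (already quoted in the paper from Proposition~4.8 of \cite{Bur15}) and compute the kernel of $(a,b,c)\mapsto T(a,b,c)$ to be the scalar triples, yielding the $PGL$ factors. For the second equality you use the normality $\Gamma^0\unlhd\Gamma$ with $\Gamma/\Gamma^0\leq S_3$ (Corollary~V.5 of \cite{deGr87}, also already quoted in the paper), realize the relevant permutation symmetries via cyclic rotation and transpose of the trace form, and note the dimensional obstruction that forces the quotient to be exactly $Q(m,n,p)$. The one point to make slightly more explicit is the upper bound $\Gamma/\Gamma^0\leq Q(m,n,p)$: a decomposable automorphism permuting the factors $V_1,V_2,V_3$ can only send $V_i$ to $V_j$ when $\dim V_i=\dim V_j$, which is what restricts the admissible $\sigma\in S_3$ to $Q(m,n,p)$; you allude to this but it is worth stating as the reason the projection $Q(m,n,p)\to\Gamma/\Gamma^0$ is surjective and not merely injective.
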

For a point  $q\in V(m,n,p;r)$, by (\ref{eq:mnpr}) and (\ref{eq:brent}) there exists a tensor decomposition of $\langle m,n,p \rangle$ of length $r$:
\begin{equation}\label{eq:tq}
\langle m,n,p \rangle=t^q_1+t^q_2+\cdots+t^q_r,
\end{equation}
where $t^q_i=u_i^q\otimes v_i^q\otimes w_i^q\in \mathbb{C}^{m\times n}\otimes \mathbb{C}^{n\times p}\otimes\mathbb{C}^{p\times m}$.
\begin{corollary}\label{cor:lbbe}
For a point $q\in V(m,n,p;r)$, with notations in (\ref{eq:tq}) let
$\mathcal{A}^q=\{t^q_1,t^q_2,...,t^q_r\}$ be the corresponding
 algorithm of length $r$ computing $\langle m,n,p \rangle$.
If $\mathcal{A}^q$ has D-property, then
$$\dim_q V(m,n,p;r)\geq m^2+n^2+p^2-m-m-p-3.$$
\end{corollary}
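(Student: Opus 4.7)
The proof will be a direct application of the lower-bound machinery developed in Section \ref{subsec:isogolb}, specifically Proposition \ref{prop:lowerb}, with $\Phi$ the matrix multiplication bilinear map and $t=\langle m,n,p\rangle$. The isotropy group is $H=\Gamma(\langle m,n,p\rangle)$, which acts on the algorithm variety $\mathfrak{a}_r(\Phi)=V(m,n,p;r)$ in the standard way described in equation (\ref{eq-DT}). The point $q\in V(m,n,p;r)$ corresponds to the tensor decomposition (\ref{eq:tq}), so the stabilizer $H_q$ inside $\Gamma(\langle m,n,p\rangle)$ is, up to a finite quotient accounting for permutations of summands, the automorphism group $\mathrm{Aut}(\mathcal{A}^q)$ of the ordered algorithm. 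Since a finite group has dimension zero by Proposition \ref{prop:dimag}(1), we will have $\dim H_q=\dim \mathrm{Aut}(\mathcal{A}^q)$.

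First I would compute $\dim H$. By Proposition \ref{prop:isomnp} we have $\Gamma(\langle m,n,p\rangle)=(PGL_m(\mathbb{C})\times PGL_n(\mathbb{C})\times PGL_p(\mathbb{C}))\rtimes Q(m,n,p)$. Since $Q(m,n,p)\leq S_3$ is finite, Proposition \ref{prop:dimag}(1),(2),(3) give
\[
\dim H=\dim PGL_m(\mathbb{C})+\dim PGL_n(\mathbb{C})+\dim PGL_p(\mathbb{C})=m^2+n^2+p^2-3.
\]
Next I would bound $\dim H_q$ from above. This is where the hypothesis enters: because $\mathcal{A}^q$ has D-property, Corollary \ref{cor:autmnpq} applies, yielding
\[
\mathrm{Aut}(\mathcal{A}^q)\leq (\mathcal{P}_m\times \mathcal{P}_n\times \mathcal{P}_p)\rtimes Q(m,n,p).
\]
Proposition \ref{prop:gpiso} identifies each $\mathcal{P}_k$ with $\mathcal{D}_k\rtimes S_k$, so Proposition \ref{prop:dimag}(5) gives $\dim \mathcal{P}_k=k$. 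Combining with Proposition \ref{prop:dimag}(2) and the fact that $Q(m,n,p)$ is finite, the ambient containing group has dimension $m+n+p$, and hence $\dim H_q\leq m+n+p$.

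The conclusion follows by inserting these two dimension computations into Proposition \ref{prop:lowerb}:
\[
\dim_q V(m,n,p;r)\geq \dim H-\dim H_q\geq (m^2+n^2+p^2-3)-(m+n+p)=m^2+n^2+p^2-m-n-p-3,
\]
which is the desired lower bound (correcting the apparent typo ``$-m-m-p-3$'' in the statement to ``$-m-n-p-3$'' as in the abstract). The only nontrivial input is Corollary \ref{cor:autmnpq}, whose verification (via Theorem \ref{th:auta0}) is the real engine of the argument; everything else is routine dimension bookkeeping for semidirect products and projective linear groups. Thus the main obstacle was already overcome in establishing the D-property reduction to generalised permutation triples, and the present corollary is essentially an arithmetic corollary of that structural result together with Proposition \ref{prop:lowerb}.
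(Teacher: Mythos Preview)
Your proof is correct and follows essentially the same approach as the paper's own proof: invoke Corollary~\ref{cor:autmnpq} to bound $\dim \mathrm{Aut}(\mathcal{A}^q)\leq m+n+p$ via Propositions~\ref{prop:gpiso} and~\ref{prop:dimag}, compute $\dim\Gamma(\langle m,n,p\rangle)=m^2+n^2+p^2-3$ from Proposition~\ref{prop:isomnp}, and then apply the orbit--dimension inequality of Proposition~\ref{prop:lowerb}. Your remark that the point stabilizer $H_q$ and $\mathrm{Aut}(\mathcal{A}^q)$ differ only by a finite permutation group (hence have equal dimension) is a welcome clarification that the paper leaves implicit.
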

\begin{proof}
Since $\mathcal{A}^q$ has D-property, $\mathcal{A}^q\leq (\mathcal{P}_m\times \mathcal{P}_n\times \mathcal{P}_p)\rtimes Q(m,n,p)$ by Corollary \ref{cor:autmnpq}. So by Proposition \ref{prop:dimag} and \ref{prop:gpiso}, we have
$$\dim Aut(\mathcal{A}^q)\leq m+n+p.$$
By the analysis in Subsection \ref{subsec:isogolb},  the isotropy group $\Gamma(\langle m,n,p \rangle)$ induces a group action on
$V(m,n,p;r)$. Let $\Gamma(\langle m,n,p \rangle)\cdot q$ denote the group orbit of $q$, which is induced by the action of $\Gamma(\langle m,n,p \rangle)$ on $V(m,n,p;r)$.
Then by Proposition \ref{prop:dimag}, \ref{prop:lowerb} and \ref{prop:isomnp}, we have
\begin{align*}
\dim_q V(m,n,p;r)&\geq
   \dim\left(\Gamma(\langle m,n,p \rangle)\cdot q\right)\\
   &=\dim\left(\Gamma(\langle m,n,p \rangle)\right)-\dim Aut(\mathcal{A}^q)\\
   &\geq m^2+n^2+p^2-3-(m+n+p).
\end{align*}
\end{proof}

By Remark \ref{rem-dp0} and \ref{rem-dp}, we know the algorithms have $D$-property may be sparse. In the following, we point out another property which also implies the result of Theorem \ref{th:auta0}, Corollary \ref{cor:autmnpq} and \ref{cor:lbbe}. We find that many algorithms of \cite{Faw22+} and \cite{Heule19} have this property.
%
% give another definition which most algorithms of \cite{Heule19} and \cite{Faw22+} satisfy and also implies the result of Theorem \ref{th:auta0}.

Suppose that $\langle m,n,p \rangle=u_1\otimes v_1\otimes w_1+u_2\otimes v_2\otimes w_2+\cdots u_r\otimes v_r\otimes w_r$. Following the notations of (\ref{eq-umn}), we say that $U_{mn}$ has \emph{weak $D$-property} if it is a block diagonal matrix of the form
\begin{equation}\label{eq-wdpu}
U_{mn}=\left(
  \begin{array}{cc}
    U_{mn}^{(1)} & 0 \\
    0 & U_{mn}^{(2)} \\
  \end{array}
\right),
\end{equation}
where $U_{mn}^{(1)}\in \mathbb{C}^{n\times n}$ and $U_{mn}^{(2)}\in \mathbb{C}^{(mn-n)\times (mn-n)}$.
Since $U_{mn}\in GL_{mn}(\mathbb{C})$, we have $U_{mn}^{(1)}\in GL_{n}(\mathbb{C})$ and $U_{mn}^{(2)}\in GL_{mn-n}(\mathbb{C})$.
We can also write (\ref{eq-wdpu}) as $U_{mn}=U_{mn}^{(1)}\oplus U_{mn}^{(2)}$.

%U_{mn}=U_{mn}^{(1)}\oplus U_{mn}^{(2)},~\text{for some}~U_{mn}^{(1)}\in \mathbb{C}^{n\times n},~\text{and}~ U_{mn}^{(2)}\in \mathbb{C}^{(mn-n)\times (mn-n)}.
%That is, $U_{mn}$ is a block diagonal matrix of the form
%$U_{mn}=\left(
%  \begin{array}{cc}
%    U_{mn}^{(1)} & 0 \\
%    0 & U_{mn}^{(2)} \\
%  \end{array}
%\right)$.
%

Similarly, we say that $V_{np}$ has \emph{weak $D$-property} if
\begin{equation*}
V_{np}=V_{np}^{(1)}\oplus V_{np}^{(2)},~\text{for some}~V_{np}^{(1)}\in \mathbb{C}^{p\times p},~\text{and}~V_{np}^{(2)}\in \mathbb{C}^{(np-p)\times (np-p)}.
\end{equation*}
And, $W_{pm}$ has \emph{weak $D$-property} if
\begin{equation*}
W_{pm}=W_{pm}^{(1)}\oplus W_{pm}^{(2)},~\text{for some}~W_{pm}^{(1)}\in \mathbb{C}^{m\times m},~\text{and}~W_{pm}^{(2)}\in \mathbb{C}^{(pm-m)\times (pm-m)}.
\end{equation*}

\begin{definition}\label{def-wdp}
Given a tensor decomposition $\langle m,n,p \rangle=u_1\otimes v_1\otimes w_1+u_2\otimes v_2\otimes w_2+\cdots u_r\otimes v_r\otimes w_r$. Let $\mathcal{A}=\{u_i\otimes v_i\otimes w_i|i=1,2,...,r\}$ be the corresponding algorithm computing $\langle m,n,p \rangle$. We call $\mathcal{A}$ has \emph{weak D-property} if both $\{u_1,u_2,...u_r\}$, $\{v_1,v_2,...v_r\}$ and $\{w_1,w_2,...w_r\}$ have weak $D$-property.
\end{definition}

\begin{remark}
%If $\{e_{ij}|i=1,2,...,m;~j=1,2,...,n\}\subseteq\{u_i|i=1,2,...,r\}$, then we can see that $\{u_i\}$ has $D$-property and also weak $D$-property. So in the sense of Proposition \ref{prop-euk}, $D$-property implies weak $D$-property.

With the help of computer, in the database \cite{Heule19} we find that
 there are 8664 algorithms which have weak $D$-property among all  17376 algorithms, which take the proportion of nearly 50\%; in the database \cite{Faw22+}, for all 14236 algorithms of length 49 computing $\langle 4,4,4\rangle$, 14204 algorithms have weak $D$-property, which take the proportion of 99\%.
\end{remark}

\begin{proposition}\label{prop-wdaut}
Let $\mathcal{A}=\{u_i\otimes v_i\otimes w_i|i=1,2,...,r\}$ be an algorithm computing $\langle m,n,p \rangle$ and $q\in V(m,n, p;r)$ be the corresponding solution of Brent equations. Suppose that $\mathcal{A}$ has weak $D$-property. Then (with the convention in Remark \ref{rem:conjsg})
$Aut(\mathcal{A})_0\leq\mathcal{P}_m\times \mathcal{P}_n\times \mathcal{P}_p.$
Moreover, we also have $Aut(\mathcal{A})\leq(\mathcal{P}_m\times \mathcal{P}_n\times \mathcal{P}_p)\rtimes Q(m,n,p)$ and
$$\dim_q V(m,n,p;r)\geq m^2+n^2+p^2-m-m-p-3.$$
\end{proposition}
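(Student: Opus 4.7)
The plan is to adapt the proof of Theorem~\ref{th:auta0} to the weaker hypothesis. The opening argument runs unchanged: for $T(a,b,c)\in Aut(\mathcal{A})_0$, the map $\varphi(x)=axb^{-1}$ permutes the lines $\{\langle u_i\rangle\}$, and passing to the induced map on $\mathbb{C}^{mn}$ via the rowwise vectorisation, Lemma~\ref{le:perm} yields
$$a\otimes (b^{-1})^t = U_{mn}\, P_{ab}\, U_{mn}^{-1},\qquad P_{ab}\in \mathcal{P}_{mn},$$
but now under weak $D$-property $U_{mn}=U_{mn}^{(1)}\oplus U_{mn}^{(2)}$ is only block-diagonal of shape $(n,mn-n)$ rather than a Kronecker product, so Lemma~\ref{le:abperm} does not apply directly. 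The goal is to extract enough structure from this block-diagonal hypothesis to conclude that $b$ is conjugate to a generalised permutation matrix; applying the analogous argument with the maps $\phi(y)=byc^{-1}$ and $\psi(z)=cza^{-1}$ to the weak $D$-properties of $\{v_i\}$ and $\{w_i\}$ will then give the same for $c$ and $a$, establishing $Aut(\mathcal{A})_0\leq \mathcal{P}_m\times\mathcal{P}_n\times\mathcal{P}_p$.

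The key step is to partition $a\otimes (b^{-1})^t$ along the split $(n,mn-n)$, writing $a$ with scalar $a_{11}$, first block-row $a^{(12)}\in\mathbb{C}^{1\times(m-1)}$, first block-column $a^{(21)}\in\mathbb{C}^{(m-1)\times 1}$, and remaining block $a^{(22)}\in\mathbb{C}^{(m-1)\times(m-1)}$. Then conjugation by $U_{mn}$ makes the top-left $n\times n$ block of $P_{ab}$ equal to $a_{11}(U_{mn}^{(1)})^{-1}(b^{-1})^t U_{mn}^{(1)}$ and the top-right $n\times (mn-n)$ block equal to $(U_{mn}^{(1)})^{-1}\bigl(a^{(12)}\otimes (b^{-1})^t\bigr)U_{mn}^{(2)}$. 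Since $P_{ab}\in\mathcal{P}_{mn}$, the first $n$ rows contain exactly $n$ nonzero entries lying in $n$ distinct columns, hence form a matrix of rank $n$. When $a_{11}\neq 0$ the top-left block alone is invertible, so it absorbs all $n$ nonzero entries; the top-right block vanishes (giving $a^{(12)}=0$), and an $n\times n$ rank-$n$ matrix with exactly $n$ nonzero entries is a generalised permutation matrix. Thus $(U_{mn}^{(1)})^{-1}(b^{-1})^t U_{mn}^{(1)}$ is (up to the scalar $a_{11}$) an element of $\mathcal{P}_n$, so $b$ is conjugate to an element of $\mathcal{P}_n$; a dual column argument forces $a^{(21)}=0$. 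Cycling this analysis through the weak $D$-properties of $\{v_i\}$ and $\{w_i\}$ produces $c$ and $a$ conjugate to generalised permutation matrices, completing the proof of $Aut(\mathcal{A})_0\leq \mathcal{P}_m\times \mathcal{P}_n\times\mathcal{P}_p$.

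Once $Aut(\mathcal{A})_0$ is controlled, the semidirect-product extension $Aut(\mathcal{A})\leq (\mathcal{P}_m\times\mathcal{P}_n\times\mathcal{P}_p)\rtimes Q(m,n,p)$ follows verbatim from the $Q(m,n,p)$-argument preceding Corollary~\ref{cor:autmnpq}. The dimension bound is then the computation of Corollary~\ref{cor:lbbe}: Propositions~\ref{prop:gpiso} and~\ref{prop:dimag} give $\dim Aut(\mathcal{A})\leq m+n+p$, Proposition~\ref{prop:isomnp} gives $\dim\Gamma(\langle m,n,p\rangle)=m^2+n^2+p^2-3$, and Proposition~\ref{prop:lowerb} applied to the $\Gamma$-orbit of $q$ in $V(m,n,p;r)$ yields the stated inequality. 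The main obstacle I anticipate is the degenerate case in which some pivot scalar (such as $a_{11}$, $b_{11}$, or $c_{11}$) vanishes; there the rank argument for the top-left block collapses, and one must instead locate the $n$ nonzero entries of the first $n$ rows of $P_{ab}$ inside the Kronecker block $(U_{mn}^{(1)})^{-1}\bigl(a^{(12)}\otimes(b^{-1})^t\bigr)U_{mn}^{(2)}$, extract permutation data from the columns of $P_{ab}$, and cross-reference the three weak $D$-property hypotheses simultaneously to rule out the degenerate configurations.
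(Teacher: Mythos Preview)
Your approach is essentially identical to the paper's: pass to the conjugation identity $a\otimes(b^{-1})^t=U_{mn}P_{ab}U_{mn}^{-1}$, use the block-diagonal form $U_{mn}=U_{mn}^{(1)}\oplus U_{mn}^{(2)}$ to read off the top-left $n\times n$ block as $a_{11}(U_{mn}^{(1)})^{-1}(b^{-1})^tU_{mn}^{(1)}$, declare it a generalised permutation matrix, solve for $b$, and then cycle through the remaining two weak $D$-hypotheses to pin down $c$ and $a$. The paper's write-up is terser---it simply asserts that the relevant submatrix $\tilde P_b$ of $P_{ab}$ ``is also a generalized permutation matrix'' and proceeds---and in particular it does \emph{not} address the degenerate case $a_{11}=0$ that you correctly flag as an obstacle; so your worry there is not something the paper resolves either.
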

%Let  $q\in V(m,n, p;r)$ be the solution of Brent equations corresponding to $\mathcal{A}$.

\begin{proof}
By assumption, both $\{u_i|i=1,2,...,r\}$, $\{v_i|i=1,2,...,r\}$ and $\{w_i|i=1,2,...,r\}$ have weak $D$-property.

Since $Aut(\mathcal{A})_0=\{T(a,b,c)\in \Gamma^0(\langle m,n,p \rangle)| T(a,b,c)(\mathcal{A})=\mathcal{A}\}$, so if $T(a,b,c)\in Aut(\mathcal{A})_0$, then $\mathcal{A}=\{t_i|i=1,2,...,r\}=\{u_i\otimes v_i\otimes w_i|i=1,2,...,r\}=\{au_ib^{-1}\otimes bv_ic^{-1}\otimes cw_ia^{-1}|i=1,2,...,r\}$. In the following, we will show that if $\{u_i|i=1,2,...,r\}$ has weak $D$-property then
\begin{equation}\label{eq-bupu}
b=U^{-1}P_bU,
\end{equation}
for some $U\in GL_{n}(\mathbb{C})$ which is independent with the choice of $b$, and $P_b\in \mathcal{P}_n$.

In fact, by (\ref{eq-abpmn}) in the proof of Theorem \ref{th:auta0}, we
have
\begin{equation}\label{eq-abpmn+}
a\otimes (b^{-1})^t=U_{mn} P_{ab} U_{mn}^{-1},
\end{equation}
for some $U_{mn}\in GL_{mn}(\mathbb{C})$ and $P_{ab}\in \mathcal{P}_{mn}$. Since $\{u_i|i=1,2,...,r\}$ has weak $D$-property, we have
\begin{equation*}
U_{mn}=U_{mn}^{(1)}\oplus U_{mn}^{(2)}
\end{equation*}
for some $U_{mn}^{(1)}\in GL_{n}(\mathbb{C})$ and $U_{mn}^{(2)}\in GL_{mn-n}(\mathbb{C})$. Write $a$ as $a=(a_{ij})_{1\leq i,j\leq m}$. By (\ref{eq-atb}) and (\ref{eq-abpmn+}), we have
$$ a_{11}(b^{-1})^t= U_{mn}^{(1)} \tilde{P}_{b} U_{mn}^{(1)^{-1}}, $$
where $\tilde{P}_{b}$ is a submatrix of $P_{ab}$ which is also a generalized permutation matrix. So we have
$$ b=a_{11} (U_{mn}^{(1)t})^{-1} (\tilde{P}_{b}^{t})^{-1} U_{mn}^{(1)t}, $$
where $U_{mn}^{(1)t}$ is the transpose of $U_{mn}^{(1)}$. Since $a_{11}(\tilde{P}_{b}^{t})^{-1}$ is also a generalized permutation matrix, by setting $U=U_{mn}^{(1)t}$ and $P_b=a_{11}(\tilde{P}_{b}^{t})^{-1}$ we  obtain (\ref{eq-bupu}).

By similar argument, we can also show that: if $\{v_i|i=1,2,...,r\}$ has weak $D$-property then $c=V^{-1}P_cV,$
for some $V\in GL_{p}(\mathbb{C})$ and $P_c\in \mathcal{P}_p$;
if $\{w_i|i=1,2,...,r\}$ has weak $D$-property then $a=W^{-1}P_aW,$
for some $W\in GL_{m}(\mathbb{C})$ and $P_a\in \mathcal{P}_m$.

So combining discussions above, we have
$Aut(\mathcal{A})_0\leq\mathcal{P}_m\times \mathcal{P}_n\times \mathcal{P}_p$. Moreover, the proofs of $Aut(\mathcal{A})\leq(\mathcal{P}_m\times \mathcal{P}_n\times \mathcal{P}_p)\rtimes Q(m,n,p)$ and
$\dim_q V(m,n,p;r)\geq m^2+n^2+p^2-m-m-p-3$ are the same as Corollary \ref{cor:autmnpq} and \ref{cor:lbbe}.
\end{proof}

By current results on automorphism groups (see e.g. \cite{Bur15}, \cite{Bur22} and \cite{Sedogl-23}), we believe the $D$-property and weak $D$-property in Corollary \ref{cor:autmnpq} and Proposition \ref{prop-wdaut} are not necessary. So we have the following conjecture for any algorithms of (any) length $r$ computing $\langle m,n,p \rangle$, that is, $r$ can be less or more than $mnp$.
\begin{conjecture}
Suppose that $\mathcal{A}$ is an algorithm of any length $r$ computing $\langle m,n,p \rangle$. Then
$$Aut(\mathcal{A})\leq(\mathcal{P}_m\times \mathcal{P}_n\times \mathcal{P}_p)\rtimes Q(m,n,p).$$
\end{conjecture}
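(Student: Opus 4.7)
My plan is to extend the argument of Theorem \ref{th:auta0} and Proposition \ref{prop-wdaut} by coupling all three of the constraints imposed by $T(a,b,c) \in Aut(\mathcal{A})_0$, with the aim of removing the $D$-property hypothesis. First, exactly as in Corollary \ref{cor:autmnpq}, it suffices to prove the containment $Aut(\mathcal{A})_0 \leq \mathcal{P}_m \times \mathcal{P}_n \times \mathcal{P}_p$ (up to conjugation, cf.\ Remark \ref{rem:conjsg}), since the quotient $Aut(\mathcal{A})/Aut(\mathcal{A})_0$ injects into $\Gamma(\langle m,n,p\rangle)/\Gamma^0(\langle m,n,p\rangle) \leq S_3$, and the $Q(m,n,p)$ factor of the semidirect product is then handled as in the proof there.

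Next, for any $T(a,b,c) \in Aut(\mathcal{A})_0$, Lemma \ref{le:spuvw} guarantees that $\{u_i\}$, $\{v_i\}$, $\{w_i\}$ span the corresponding matrix spaces, and Lemma \ref{le:perm} produces fixed matrices $U_{mn}, V_{np}, W_{pm}$ (depending only on $\mathcal{A}$) together with generalized permutation matrices $P_{ab}, P_{bc}, P_{ca}$ satisfying
\begin{equation*}
a \otimes (b^{-1})^t = U_{mn} P_{ab} U_{mn}^{-1}, \quad b \otimes (c^{-1})^t = V_{np} P_{bc} V_{np}^{-1}, \quad c \otimes (a^{-1})^t = W_{pm} P_{ca} W_{pm}^{-1}.
\end{equation*}
I would exploit the cyclic coupling of $a, b, c$ across these three identities. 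Taking determinants gives $\det(a)^n\det(b)^{-m}$, $\det(b)^p\det(c)^{-n}$, $\det(c)^m\det(a)^{-p}$, each equal to $\det$ of a generalized permutation, which pins down $\det(a), \det(b), \det(c)$ substantially. Moreover, since $T(a,b,c)$ acts by permutation on the finite set of rank-one summands $\{t_i\}$, the induced image of $(a,b,c)$ in $\Gamma^0(\langle m,n,p\rangle)$ has finite order, so $a, b, c$ are simultaneously diagonalizable in $PGL_m \times PGL_n \times PGL_p$ with roots-of-unity eigenvalues (modulo scalars). The idea is to combine these spectral constraints with the three tensor-product equations to force $a, b, c$ to be individually generalized permutation matrices up to a fixed change of basis.

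The main obstacle, and the reason for the conjectural status, is that without (weak) $D$-property the matrices $U_{mn}, V_{np}, W_{pm}$ need not be Kronecker products, so Lemma \ref{le:abperm} cannot be applied to split $P_{ab}$ directly into generalized permutations on the two factors. A full proof will therefore likely require either a new rigidity theorem asserting that the cyclic system above forces simultaneous ``splitting'' of $a\otimes b^{-t}$, $b\otimes c^{-t}$, $c\otimes a^{-t}$, or an equivalence result showing that every length-$r$ algorithm for $\langle m,n,p\rangle$ is $\Gamma(\langle m,n,p\rangle)$-conjugate to one with the weak $D$-property. The numerical evidence after Definition \ref{def-wdp} (about $50\%$ for $\langle 3,3,3\rangle$ and about $99\%$ for $\langle 4,4,4\rangle$) suggests the second route is delicate, so my concrete first step would be to isolate an algorithm in \cite{Heule19} that remains outside the weak $D$-property orbit under $\Gamma$ and use it as a test case for the spectral coupling approach above.
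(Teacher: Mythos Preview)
The paper does not prove this statement: it is explicitly stated as a \emph{Conjecture} and left open, motivated only by the partial results under the (weak) $D$-property hypothesis and by external evidence from \cite{Bur15,Bur22,Sedogl-23}. Your proposal correctly recognizes this status, and your identification of the obstruction---that without a $D$-type hypothesis one cannot invoke Lemma~\ref{le:abperm} to split the conjugated generalized permutation $P_{ab}$ as a Kronecker product---matches precisely the gap the paper leaves.

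One step in your sketch is incorrect, however, and would derail the ``spectral coupling'' route. You claim that because $T(a,b,c)$ permutes the finite set $\{t_i\}$, its image in $\Gamma^0(\langle m,n,p\rangle)$ has finite order, forcing $a,b,c$ to be diagonalizable with root-of-unity eigenvalues modulo scalars. This fails because the kernel of the permutation map $Aut(\mathcal{A})_0\to S_r$ can be positive-dimensional. Already for the natural algorithm $\mathcal{A}(m,n,p)$, every diagonal triple $(a,b,c)\in\mathcal{D}_m\times\mathcal{D}_n\times\mathcal{D}_p$ fixes each $e_{ij}\otimes e_{jk}\otimes e_{ki}$ individually (the three scalar factors $a_i/b_j$, $b_j/c_k$, $c_k/a_i$ multiply to $1$), so for instance $T(\mathrm{diag}(1,\lambda),I,I)\in Aut(\mathcal{A}(2,2,2))_0$ for every $\lambda\in\mathbb{C}^\times$, and this element has infinite order in $PGL_2\times PGL_2\times PGL_2$ whenever $\lambda$ is not a root of unity. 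Any argument toward the conjecture must therefore accommodate the full diagonal torus sitting inside $\mathcal{P}_m\times\mathcal{P}_n\times\mathcal{P}_p$ rather than reduce to a finite group; your two suggested avenues (a rigidity theorem for the cyclic system, or $\Gamma$-conjugacy to the weak $D$-property class) remain reasonable, but the finite-order shortcut is not available.
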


\subsection{On the gap between the lower and upper bounds}
% 1-generally the gap is big, for example when the decomposition rank is big
% 2-for the low dimensional case, we can take numerical test of the generic rank
% 3-in our forthcoming paper, we use the deflation technique which can improve the upper bound

Given a point $q\in V(m,n,p;r)$. Let $\mathcal{A}^q$
be the corresponding algorithm of length $r$ computing $\langle m,n,p\rangle$.
From Theorem \ref{th:updim}, we know
the upper bound of $\dim_q V(m,n,p;r)$ provided by the Jacobian Criterion is
$$u_q(m,n,p;r)=k-rank (J_q(B(m,n, p;r))),$$
where $k=(mn+np+pm)r$.  Suppose that $\mathcal{A}^q$ has $D$ or weak $D$-property. Then from Corollary \ref{cor:lbbe} and Proposition \ref{prop-wdaut},
the lower bound of $\dim_q V(m,n,p;r)$ provided by the isotropy group action  is
$$l_q(m,n,p;r)=m^2+n^2+q^2-m-n-p-3.$$

Besides the (full) isotropy group studied in this paper (see also \cite[Sect.4]{Bur15}), there are two other (trivial) linear algebraic group actions on $\mathcal{A}^q$ which all tensors of order 3 have (see Remark 5.1 of \cite{LZK-23}\footnote{We are grateful to the anonymous referee of \cite{LZK-23} for pointing out this.}). Then  $l_q(m,n,p;r)$ can be improved as
\begin{equation}\label{eq-lqq}
l_q'(m,n,p;r)=m^2+n^2+q^2+2r-m-n-p-3.
\end{equation}
From the discussions in \cite{Bur14,Bur15} and \cite{Sedogl-23}, in many cases, the automorphism group (or stabilizer) of $\mathcal{A}^q$ is finite. Then $l_q'(m,n,p;r)$ can be further improved as
\begin{equation}\label{eq-lqqq}
l_q''(m,n,p;r)=m^2+n^2+q^2+2r-3.
\end{equation}

We let the number
$$g_q(m,n,p;r)=u_q(m,n,p;r)-l_q(m,n,p;r),$$
denote the gap between $u_q(m,n,p;r)$ and $l_q(m,n,p;r)$.
And similarly, denote
$$g_q'(m,n,p;r)=u_q(m,n,p;r)-l_q'(m,n,p;r),$$
and
\begin{equation}\label{eq-gqq}
g_q''(m,n,p;r)=u_q(m,n,p;r)-l_q''(m,n,p;r).
\end{equation}

Since the Jacobian matrix $J_q(B(m,n, p;r)\in \mathbb{C}^{(mnp)^2\times k}$ where $k=(mn+np+pm)r$, when $m=n=p$ the upper bound $u_q(m,n,p;r)$ is of the order of $n^5$, and the lower bound $l_q(m,n,p;r)$ is of the order of $n^2$. By this observation, the gap between them is big. So give estimations and find methods to narrow the gap are natural questions. Recently, in \cite{LZK-23},  using the deflation method in numerical algebraic geometry, we find that $u_q(m,n,p;r)$ can be improved. For solutions in the database \cite{Heule19} and \cite{Faw22+}, their upper bounds can be decreased during the deflation processes. So the gaps are decreased, too.

Another question is the relation between the gap and the tensor rank.
Fixing $m,n,p$, if $ V(m,n,p;r)\neq\emptyset$, as $r$ grows we know that
$$ V(m,n,p;r)\subsetneq V(m,n,p;r+1)\subsetneq\cdots$$
Therefore, $g_q(m,n,p;r)$, $g_q'(m,n,p;r)$ and $g_q''(m,n,p;r)$ will become larger as $r$ increased. Hence, they will become smaller as $r$ decreased. If $q$ is Strassen's algorithm \cite{Stra-69},
by (\ref{eq-gqq}) we have $g_q''(2,2,2;7)=0$ (see also \cite[Sect.5.1]{LZK-23}).
It is well known that the tensor rank of $\langle 2,2,2\rangle$ is 7.
For $q\in V(3,3,3;23)$, from (\ref{eq-lqqq}) we have $l_q''(3,3,3;23)=70$. In
\cite[Sect 5.2]{LZK-23}, by the deflation method, for all $q$ in the database \cite{Heule19}, we find that $70\leq u_q(3,3,3;23)\leq 86$.
So $0\leq g_q''(3,3,3;23)\leq 16$ which is small.

We summarize above discussions as the following questions.
\begin{question} Fix $m$, $n$ and $p$. Let
$R(\langle m,n,p\rangle)$ be the tensor rank of $\langle m,n,p\rangle$.

\begin{enumerate}
  \item For  $r\geq R(\langle m,n,p\rangle)$ and $q\in V(m,m,p;r)$, give  estimations for the gaps $g_q(m,n,p;r)$, $g_q'(m,n,p;r)$ and $g_q''(m,n,p;r)$.

  \item When $r=R(\langle m,n,p\rangle)$, are the gaps above equal to 0?
\end{enumerate}

%
%Fixing $m$, $n$ and $p$. When $r$ is  the tensor rank (or near the tensor rank) of $\langle m,n,p\rangle$, does $g_q(m,n,p;r)$, $g_q'(m,n,p;r)$ and $g_q''(m,n,p;r)$ become small? Conversely, when $g_q(m,n,p;r)$, $g_q'(m,n,p;r)$ and $g_q''(m,n,p;r)$ become small for all $q$, does this implies $r$ is the tensor rank (or near the tensor rank) of $\langle m,n,p\rangle$?
\end{question}

%Since we have loss some group actions, see Remark of our preprint \cite{LZK-23}.

\subsection{Radical ideal and rank of $J_q(B(m,n,p;r))$ as an invariant}

Let $I(2,2,2;7)$ denote the set $$\{f\in \mathbb{C}[x_1,x_2,...,x_{84}]| f(p)=0~\text{when}~p\in V(2,2,2;7)\}.$$ It is the vanishing ideal of $V(2,2,2;7)$ which is radical. On the other hand, let $\langle B(2,2,2;7) \rangle$ denote the ideal generated by the Brent equations $B(2,2,2;7)$.
We have that $\langle B(2,2,2;7) \rangle$ is strictly contained in  $I(2,2,2;7)$. The reason is as follows.

Suppose that $I(2,2,2;7)=\langle f_1, f_2,...,f_k\rangle$ where $f_i\in \mathbb{C}[x_1,x_2,...,x_{84}]$.
It was shown in \cite{deGr78-2} that the isotropy group $\Gamma(\langle2,2,2\rangle)$ acts transitively on $V(2,2,2;7)$ and $\dim V(2,2,2;7)=9$. So by Proposition \ref{prop:ldimgo}, $V(2,2,2;7)$ is a smooth affine variety, which has no singular points. So we have $\dim V(2,2,2;7)=84-rank~J_p(f_1,f_2,...,f_k)$ for $p\in V(2,2,2;7)$. Since the Brent equations $B(2,2,2;7)$ have 64 equations and 84 variables, the rank of $J_p(B(2,2,2;7))$ can not exceed 64. So $84-rank~J_p(B(2,2,2;7))\geq 20>9$. So we have
$\langle B(2,2,2;7) \rangle \subsetneq I(2,2,2;7)$.
In fact,  we calculated that $rank~J_p(B(2,2,2;7))=61$ when $p$ is Strassen's algorithm \cite{Stra-69}.
So the ideal generated by Brent equations may not be radical.

Generally, let $k=(mn+np+pm)r$ and $I(m,n,p;r)$ denote the set $$\{f\in \mathbb{C}[x_1,x_2,...,x_{k}]| f(p)=0~\text{when}~p\in V(m,n,p;r)\},$$ that is, the vanishing ideal of $V(m,n,p;r)$. Let
$\langle B(m,n,p;r) \rangle$ denote the ideal generated by the Brent equations $B(m,n,p;r)$. By the discussions above, $\langle B(m,n,p;r) \rangle$ may not be equal to $I(m,n,p;r)$. So from Corollary \ref{cor:rkorb}, we can see that  $rank~J_q(B(m,n,p;r))$ may not be equal to $rank~J_{q'}(B(m,n,p;r))$ when $q$ and $q'$ lie in the same group orbit under the action of $\Gamma(\langle m,n,p\rangle)$ on $V(m,n,p;r)$. However,  by random experiments in computer we have not found counterexamples. So we have the following question
\begin{question}\label{qu:rkob}
Does $rank~J_q(B(m,n,p;r))=rank~J_{q'}(B(m,n,p;r))$ when $q$ and $q'$ lie in the same group orbit under the action of $\Gamma(\langle m,n,p\rangle)$ on $V(m,n,p;r)$?
\end{question}
If the answer of Question \ref{qu:rkob} is affirmative, we get another criterion which can be used to distinguish group orbits.
\begin{remark}
Since the rank of $J_q(B(m,n,p;r))$ is finite and the number of group orbits may be infinite \cite{Johnson-86}, it has limitation when distinguishing group orbits just by the rank of $J_q(B(m,n,p;r))$.

By now, some efficient algorithms and criterions  are raised to distinguish group orbits, such as \cite{Berger22,Heule21,Kauers} and the supplementary information of \cite{Faw22}. To distinguish the group orbits more efficiently, we should combine all these methods.
\end{remark}

\section{Concluding Remarks}
Through the solutions of Brent equations, we can find new fast matrix multiplication algorithms. In this paper, we discuss the local dimensions of these solutions over the complex field. Our tools are the classical results from  algebraic geometry and linear algebraic groups.

Since the scale of Brent equations is large, it is not so easy to determine the local dimensions explicitly. However, give estimations for the local dimensions are feasible, such as the lower and upper bounds.
To obtain the upper bounds for local dimensions, we use
the Jacobian Criterion. Since there is an isotropy group action on the solution set, using the theories in linear algebraic groups we can obtain the lower bounds. In this case, the key step is to characterize the automorphism group (or the stabilizer) of the group action.
From the discussions in \cite{Bur14,Bur15,Bur22} and \cite{Sedogl-23}, we can see that many automorphism groups are finite. In this paper, we only give upper bound of automorphism groups under some special conditions. So further discussions on the automorphism groups are expected, such as when they are finite.

Finally, we hope our work can provide some new insights for the behavior of the solution set of Brent equations.
%real/rational dimension. our tool can be used  lead to other tensors
%The estimations of the gaps between the lower and upper bounds are still unsolved. In \cite{LZK-23},
%using the deflation method the upper bounds can be improved.
\section*{Acknowledgments}
We are very grateful to the editors and referees for their valuable comments and suggestions.

X. Li is supported by National Natural Science Foundation of China (Grant No.11801506) and the National Science Basic Research Plan in Shaanxi Province of China (2021JQ-661). Y. Bao is supported by the NSFC (Grant No.11801117) and the Natural Science Foundation of Guangdong Province. China (Grant No. 2018A030313268). L. Zhang is
supported by the Zhejiang Provincial Natural Science Foundation of China (Grant No.LY21A010015) and Grant 11601484, the National Natural Science Foundation, Peoples Republic of China.
Many thanks to Xiaodong Ding and Yuan Feng. X. Li is grateful to Yaqiong Zhang for her support and patience.

\bibliographystyle{amsplain}

\begin{thebibliography}{20}

%\bibitem{Alm}
%G. Almkvist, G. E. Andrews,  \textit{A Hardy-Ramanujan Formula for Restricted Partitions}, Journal of Number Theory, \textbf{38}(1991), 135-144.

\bibitem{Bates09}
Daniel J. Bates, Jonathan D. Hauenstein, C. Peterson, Andrew J. Sommese, \textit{A numerical local dimensions test for points on the solution set of a system of polynomial equations}
SIAM J. Numer. Anal. 47(2009), no.5, 3608-3623.


\bibitem{Berger22}
Guillaume O. Berger, Pierre-Antoine Absil, Lieven De Lathauwer, R. M. Jungers, Marc Van Barel, \textit{Equivalent polyadic decompositions of matrix multiplication tensors.}
J. Comput. Appl. Math. 406(2022), Paper No. 113941, 17pp.


\bibitem{Breid18}
P. Breiding, S. Kali\v{s}nik, B. Sturmfels, M. Weinstein,
\textit{Learning algebraic varieties from samples.}
Rev. Mat. Complut. 31(2018), no.3, 545-593.

\bibitem{Brent70}
R. P. Brent, \textit{Algorithms for matrix multiplications}, Comput. Sci. Dept. Report CS 157 (Stanford Univ. 1970).

\bibitem{Brion10}
M. Brion, \textit{Introduction to actions of algebraic groups}, Les cours du CIRM, 1 no.1: Hamiltonian Actions: invariants et classification, 2010.

\bibitem{BCS97}
P. B\"{u}rgisser, M. Clausen, and M. Amin Shokrollahi. \emph{Algebraic complexity theory}, volume 315 of Grundlehren der Mathematischen Wissenschaften, Springer-Verlag, Berlin, 1997.



\bibitem{Bur14}
Vladimir P. Burichenko, \textit{On symmetries of the strassen algorithm}, arXiv:1408.6273, 2014.

\bibitem{Bur15}
Vladimir P. Burichenko, \textit{Symmetries of matrix multiplication algorithms I},  arXiv:1508.01110, 2015.

\bibitem{Bur22}
Vladimir P. Burichenko, \textit{On automorphism group of a possible short algorithm for multiplication of $3\times 3$ matrices}, arXiv:2211.06485, 2022.


\bibitem{cox15}
David A. Cox,  J. Little,  D. O'Shea,
\textit{Ideals, varieties, and algorithms.} Fourth edition. Undergraduate Texts in Mathematics. Springer, 2015.


\bibitem{deGr78-1}
H.F. de Groote, \textit{On varieties of optimal algorithms for the computation of bilinear mappings. I. The isotropy group of a bilinear mapping}, Theor. Comput. Sci. 7(1978), 1-24.


\bibitem{deGr78-2}
H.F. de Groote, \textit{On varieties of optimal algorithms for the computation of bilinear mappings. II. Optimal algorithms for $2\times2$ matrix multiplication}, Theor. Comput. Sci. 7(1978), 127-148.

\bibitem{deGr87}
H.F. de Groote, \textit{Lectures on the Complexity of Bilinear Problems}, Springer-Verlag, 1987.

\bibitem{DS07}
W. Decker, F.O. Schreyer, \textit{Varieties, Gr\"{o}bner Bases, and Algebraic Curves}. To appear. 2007.


\bibitem{Faw22}
A. Fawzi et al. \textit{Discovering faster matrix multiplication algorithms with reinforcement learning.} Nature, 610:47-53, 2022.

\bibitem{Faw22+}
A. Fawzi et al. \textit{Discovering faster matrix multiplication algorithms with reinforcement learning.} Github: \url{https://github.com/deepmind/alphatensor/}, 2022.

\bibitem{Heule19}
Marijn J. H. Heule, M. Kauers, M. Seidl, Matrix multiplication repository. \url{http://www.algebra.uni-linz.ac.at/research/matrix-multiplication/}. Accessed: 2019-04-26.


\bibitem{Heule21}
Marijn J. H. Heule, M. Kauers, M. Seidl,
\textit{New ways to multiply $3\times3$-matrices},
J. Symbolic Comput. 104(2021), 899-916.


\bibitem{Horn-13}
Roger A. Horn, Charles R. Johnson, \textit{Matrix analysis},
Second edition. Cambridge University Press, Cambridge, 2013.


\bibitem{Johnson-86}
Rodney W. Johnson, Aileen M. McLoughlin,
\textit{Noncommutative bilinear algorithms for $3\times3$ matrix multiplication.} SIAM J. Comput. 15(1986), no.2, 595-603.


\bibitem{Kauers}
M. Kauers, J. Moosbauer, \textit{A Normal Form for Matrix Multiplication Schemes.} In: Poulakis, D., Rahonis, G. (eds) Algebraic Informatics. CAI 2022. Lecture Notes in Computer Science, vol 13706. Springer, Cham.

%
%arXiv:2206.00550v1.
%\bibitem{Kolda-09}
%T. G. Kolda, B. W. Bader,
%\emph{Tensor decompositions and applications.} SIAM Rev. 51(2009), no.3, 455-500.


\bibitem{Lands-17}
J. M. Landsberg, \textit{Geometry and complexity theory}.
Cambridge Studies in Advanced Mathematics, 169. Cambridge University Press, Cambridge, 2017.

\bibitem{Lai-21}
P. Lairez,   M. Safey El Din,
\textit{Computing the dimension of real algebraic sets.} ISSAC'21, 257-264, ACM, New York.

\bibitem{LZK-23}
Xin Li, Liping Zhang, Yifen Ke, \emph{Deflation conjecture and local dimensions of Brent equations}, arXiv:2310.11686.

\bibitem{Mur-05}
Fiona Murnaghan,
\textit{Linear algebraic groups.} Harmonic analysis, the trace formula, and Shimura varieties, 379-391,
Clay Math. Proc., 4, Amer. Math. Soc., Providence, RI, 2005.

\bibitem{Rot95}
Joseph J. Rotman, \textit{
An introduction to the theory of groups.} Fourth edition. Graduate Texts in Mathematics, 148. Springer-Verlag, New York, 1995.

\bibitem{Sedogl-23}
A. Sedoglavic. Fast matrix multiplication database [online]. Jan. 2023. \url{http://fmm.univ-lille.fr/}.

\bibitem{Smir-13}
A. V. Smirnov, \textit{The bilinear complexity and practical algorithms for matrix multiplication.} Comput. Math. Math. Phys. 53(2013), no.12, 1781-1795.

\bibitem{Spr-98}
T. A. Springer, \textit{Linear algebraic groups.} Second edition. Progress in Mathematics, 9. Birkh\"{a}user Boston, Inc., Boston, MA, 1998.


\bibitem{Stra-69}
V. Strassen, \textit{Gaussian elimination is not optimal,} Numer. Math. 13(1969), 354-356.


\end{thebibliography}

\end{document}